\documentclass{article}

\usepackage{url}
\usepackage{array,color,textcomp,rotfloat,amsmath,amsthm,amssymb,bigstrut,mathrsfs}
\usepackage[plain]{algorithm}
\usepackage{algpseudocode}

\newtheorem{theorem}{Theorem}[section]
\newtheorem{definition}[theorem]{Definition}
\newtheorem{proposition}[theorem]{Proposition}
\newtheorem{corollary}[theorem]{Corollary}
\newtheorem{lemma}[theorem]{Lemma}

\newtheorem*{question}{Question}

\newcommand{\birthday}{\mathrm{b}}

\newcommand{\N}{\ensuremath{\mathscr{N}}}
\renewcommand{\P}{\ensuremath{\mathscr{P}}}
\newcommand{\linked}{\bowtie}
\newcommand{\mex}{\mathrm{mex}}
\newcommand{\sh}{\textrm{\raisebox{0.85pt}{\tiny \#}}}
\newcommand{\zsp}{{\color{white}0}}
\newcommand{\bd}{\mathrm{b}}
\newcommand{\defn}[1]{\textbf{\textup{#1}}}
\DeclareMathSymbol{\Star}      {\mathord}  {symbols}{"03}
\DeclareMathSymbol{\Neg}       {\mathord}  {symbols}{"00}

\bibliographystyle{abbrv}

\title{On the Structure of Mis\`ere Impartial Games \\ {\Large \textsc{preprint}}}
\author{Aaron N. Siegel}

\begin{document}

\maketitle

\begin{abstract}
We consider the abstract structure of the monoid $\mathcal{M}$ of mis\`ere impartial game values. Several new results are presented, including a proof that the group of fractions of $\mathcal{M}$ is almost torsion-free; a method of calculating the number of distinct games born by day~$7$; and some new results on the structure of prime games. Also included are proofs of a few older results due to Conway, such as the Cancellation Theorem, that are essential to the analysis but whose proofs are not readily available in the literature.

Much of the work presented here was done jointly with John Conway and Dan Hoey, and I dedicate this paper to their memory.
\end{abstract}

\section{Mis\`ere Impartial Games}

The study of impartial combinatorial games has a long history, beginning with Bouton's 1901 solution to Nim~\cite{Bou01} in both normal and mis\`ere play. While the normal-play theory made steady progress in the decades after Bouton, an understanding of mis\`ere play took far longer to come together. The challenges, as we now know, are due primarily to the intrinsic and often counterintuitive complexity of mis\`ere combinatorics.

Grundy and Smith were the first to appreciate the full scope of the difficulties. With evident frustration they wrote, in a seminal 1956 paper~\cite{GrS56},

\begin{quote}
Various authors have discussed the ``disjunctive compound'' 
of games with the last player winning (Grundy~\cite{Gru39}; 
Guy and Smith~\cite{GS56}; Smith~\cite{Smi}).  We attempt here to 
analyse the disjunctive compound with the last player losing, 
though unfortunately with less complete success \dots.
\end{quote}

They understood the proviso and its role in mis\`ere simplification (cf.~Section~\ref{section:simplestform} below), and they held out hope that new techniques might be discovered that would lead to additional simplification rules. Those hopes were dashed by Conway, who proved in the early 1970s that if a game $G$ cannot be simplified via the Grundy--Smith rule, then in fact $G$ is in simplest form. This makes application of the canonical mis\`ere theory essentially intractable in the general case, and subsequent analyses of mis\`ere games have focused on alternative reductions, such as the quotient construction due to Plambeck~\cite{Pla05} and Plambeck and Siegel~\cite{PS08}.

Nonetheless, the canonical mis\`ere theory---despite its limited practical utility---gives rise to a fascinating structure theory. Define mis\`ere game values in the usual manner~\cite[Chapter V.1]{Sie13}:
\[
G = H \quad\text{if}\quad o^-(G+X) = o^-(H+X) \quad\text{for all mis\`ere impartial games } X,
\]
where $o^-(G)$ denotes the mis\`ere outcome of~$G$. The set of mis\`ere game values forms a commutative monoid~$\mathcal{M}$, and it is an alluring problem to study the structure of this monoid for its own sake. Conway proved in the 1970s that $\mathcal{M}$ is cancellative; hence it embeds in its group of differences~$\mathcal{D}$, and we have a rather curious Abelian group that arises cleanly ``in nature.''

Several results on the structure of $\mathcal{M}$ (and~$\mathcal{D}$) are stated in ONaG without proof. The aim of this paper is twofold: first, to gather together what is known about the structure of~$\mathcal{M}$, including proofs of previously known results, into a single narrative; and second, to extend somewhat the frontier of knowledge in this area.

In the former category, we include in particular a proof of the Cancellation Theorem; a derivation of the exact count of $|\mathcal{M}_6|$ (the number of distinct games born by day~$6$); and a proof that every game $G$ can be partitioned, nonuniquely, into just finitely many \textbf{prime parts} (a definition is given below in Section~\ref{section:primes}). All three results are due to Conway~\cite{Con01}, although the count of $|\mathcal{M}_6|$ was stated inaccurately in ONaG and later corrected by Chris Thompson~\cite{Tho99}.

We also offer a smattering of new results:
\begin{itemize}
\item In Section~\ref{section:concubines}, we show that Conway's \textbf{mate} construction is not invariant under equality. In retrospect this should perhaps not be shocking, but it came as a surprise when it was discovered.
\item In Section~\ref{section:dayn} (with additional details given in Appendix~\ref{appendix:daynrecurrences}), we show how to compute $|\mathcal{M}_7|$, the exact number of games born by day~$7$. (The output of this calculation, though it fits comfortably in computer memory, is too large to include in its entirety in a journal article.)
\item In Section~\ref{section:parts}, we show that $\mathcal{D}$ is ``almost'' torsion-free (in precise terms, it is torsion-free modulo association, as defined in Section~\ref{section:parts}). The proof is not especially difficult, but it is non-obvious; Conway previously gave thought to this question, writing in ONaG: ``Further properties of the additive semigroup of games seem quite hard to establish---if $G + G = H + H$ is $G$ necessarily equal to $H$ or $H + 1$?'' (Theorem~\ref{theorem:torsion} implies an affirmative answer.)
\item In Sections~\ref{section:parts} and~\ref{section:primes}, we extend and elaborate on Conway's theory of prime partitions. As one application of this work, we show that all games born by day $6$ have the Unique Partition Property.
\end{itemize}
The first and last results were joint work with John Conway and Dan Hoey, conducted in Princeton during the 2006--07 academic year. In addition, a computational engine for mis\`ere games, written by Hoey as an extension to \emph{cgsuite}, has proved invaluable in assisting the work in this paper.

\section{Prerequisites}

We briefly review the notation and foundational material for mis\`ere games. Results in this section are stated without proof; a full exposition, including proofs for all results stated here, can be found in~\cite[Sections V.1--V.3]{Sie13}.

Formally, an impartial game is identified with the set of its options, so that $G' \in G$ means ``$G'$ is an option of~$G$''. We write $G \cong H$ to mean that $G$ and $H$ are identical as sets; thus it is possible that $G = H$ but $G \not\cong H$.

It is customary to define
\[\begin{array}{lcl}
0 & = & \{\} \bigstrut[b]\\
\Star & = & \{0\} \bigstrut\\
\Star 2 & = & \{0,\ \Star\} \bigstrut\\
\Star m & = & \{0,\ \Star,\ \Star2,\ \ldots,\ \Star(m-1)\} \bigstrut[t]
\end{array}\]
Since only impartial games are under consideration in this paper, we will follow Conway's convention and drop the asterisks, writing
\[0,\ 1,\ 2,\ \ldots,\ m,\ \ldots\]
in place of
\[0,\ \Star,\ \Star2,\ \ldots,\ \Star m,\ \ldots\]
Thus $2 + 2$ is \emph{not} the integer~$4$; it is the game obtained by playing two copies of $\Star2$ side-by-side.

The following conventions will also be used freely. Options of $G$ may be written by concatenation, rather than using set notation; for example, $632$ is the game $\{\Star6,\Star3,\Star2\}$, \emph{not} the integer six hundred thirty-two.  Subscripts denote sums of games: $4_2$ is $\Star 4 + \Star 2$ and $6_{422}$ is $\Star 6 + \Star 4 + \Star 2 + \Star 2$.  Finally, we write $G_\sh$ (pronounced ``$G$~sharp'') for the singleton $\{G\}$.  Sometimes we will employ a chain of sharps and subscripts, which should be read left-to-right; for example:
\[2_{\sh 4 \sh \sh} = (2_\sh + 4)_{\sh \sh}.\]

We write $\mathcal{M}$ for the commutative monoid of all (finite) mis\`ere impartial game values. This monoid can be stratified according to the usual hierarchy:
\begin{definition}
The \defn{formal birthday} $\tilde{\bd}(G)$ of a game $G$ is defined by:
\[\tilde{\bd}(0) = 0; \qquad \tilde{\bd}(G) = \max\left\{\tilde{\bd}(G') + 1 : G' \in G\right\}.\]
The \defn{birthday} $\bd(G)$ is given by
\[\bd(G) = \min\left\{\bd(H) : H = G\right\}.\]
\end{definition}
It is clear from this definition that $\bd(G)$ depends only on the value of~$G$, so that we may write for $n \geq 0$
\[\mathcal{M}_n = \{G \in \mathcal{M} : \bd(G) \leq n\},\]
the set of game values \defn{born by day $n$}, and
\[\mathcal{M} = \bigcup_n \mathcal{M}_n.\]

For a set $X$ we shall write $|X|$ for the cardinality of~$X$, so that $|\mathcal{M}_n|$ is the number of distinct games born by day~$n$.

\subsection*{Mis\`ere Simplification}

The starting point for the canonical mis\`ere theory is the Grundy--Smith \textbf{simplification rule}.

\begin{definition}
Let $G \cong \{G_1',\ldots,G_k'\}$. Let $H$ be a game whose options include those of~$G$:
\[
H \cong \{G_1',\ldots,G_k',H_1',\ldots,H_l'\}.
\]
We say that $H$ \defn{simplifies to} $G$ provided that:
\begin{enumerate}
\item[(i)] $G \in H_j'$ for each $H_j'$ (i.e., each new option $H_j'$ contains a \textbf{reverting move} back to $G$ itself); and
\item[(ii)] if $G \cong 0$, then $o(H) = \N$.
\end{enumerate}
\end{definition}
Clause (ii) is known as the \defn{proviso}.

\begin{theorem}
If $H$ simplifies to $G$, then $G = H$.
\end{theorem}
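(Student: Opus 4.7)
The plan is to establish the stronger assertion $o^-(G + X) = o^-(H + X)$ for every misère impartial game $X$, proceeding by induction on the formal birthday of $X$. Since misère outcomes are binary, it suffices to prove the biconditional $o^-(G + X) = \P \iff o^-(H + X) = \P$.

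For the forward direction, suppose $o^-(G + X) = \P$, so that $G + X$ has at least one option and every option has outcome $\N$. The options of $H + X$ fall into three families. The shared options $G_i' + X$ have outcome $\N$, since they are already options of $G + X$. The options $H + X'$ for $X' \in X$ have, by the inductive hypothesis applied to $X'$, the same outcome as $G + X'$, namely $\N$. Finally, each new option $H_j' + X$ admits, by clause (i), a move $H_j' + X \to G + X$ into a $\P$-position, and therefore itself has outcome $\N$. Since $H + X$ inherits every option of $G + X$, it has at least one option, and $o^-(H + X) = \P$.

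For the reverse direction, suppose $o^-(H + X) = \P$. Each option $G_i' + X$ of $G + X$ is also an option of $H + X$ and so has outcome $\N$; each option $G + X'$ has, by induction, the same outcome as $H + X'$, which is $\N$. It remains to check that $G + X$ has at least one option. The unique way this can fail is $G \cong 0$ and $X \cong 0$ simultaneously. In that case, clause (ii) yields $o^-(H) = \N$, and an easy sub-induction on the formal birthday of $H$ gives $o^-(H + 0) = o^-(H) = \N$, contradicting the hypothesis $o^-(H + X) = \P$. Hence $G + X$ has an option, and $o^-(G + X) = \P$.

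The main obstacle is this final degenerate case, which is the sole reason the proviso appears in the hypothesis. Without clause (ii), one could construct an $H$ whose new options all revert to $G \cong 0$ but whose own outcome nonetheless differs from $o^-(0) = \N$, breaking equality at $X \cong 0$. Clause (i), by contrast, is precisely calibrated so that the new options $H_j' + X$ inherit the $\P$-status of $G + X$ whenever it is needed; the remainder of the argument is routine bookkeeping on options.
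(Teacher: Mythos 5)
Your proof is correct. The paper itself states this theorem without proof (it is in the prerequisites section, deferred to~\cite[Sections V.1--V.3]{Sie13}), but your argument---induction on the formal birthday of $X$, checking that every option of $H+X$ is an $\N$-position when $o^-(G+X)=\P$ (using the reverting moves $H_j'+X \to G+X$ for the new options) and conversely, with the proviso invoked exactly once to rule out the degenerate case $G\cong 0$, $X\cong 0$---is the standard proof of the Grundy--Smith simplification rule and is sound as written.
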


A simple but useful application of this theorem:

\begin{theorem}[Mis\`ere Mex Rule]
Let $a_1,a_2,\ldots,a_k \in \mathbb{N}$, and suppose that
\[G \cong \{a_1,a_2,\ldots,a_k\}.\]
Then $G = m$, where $m = \mex\{a_1,a_2,\ldots,a_k\}$, \emph{provided that} at least one $a_i$ is $0$ or~$1$.
\end{theorem}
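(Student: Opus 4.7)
The plan is to apply the Grundy--Smith Simplification Theorem directly, taking the ``simplified'' game to be $m$ (that is, $\Star m$, which in this paper's convention is written simply~$m$) and the ``larger'' game to be $G$. Recall that by definition $m \cong \{0, 1, \ldots, m-1\}$, so its options are exactly the nimbers strictly less than~$m$.

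First I observe that every option of $m$ is already an option of~$G$: this is immediate from the definition of $\mex$, since $m = \mex\{a_1, \ldots, a_k\}$ forces each of $0, 1, \ldots, m-1$ to appear among the $a_i$. Writing the remaining options of $G$ as $b_1, \ldots, b_\ell$, I then verify clause~(i) of the simplification rule. Each $b_j$ satisfies $b_j > m$, since $b_j \notin \{0, \ldots, m-1\}$ and $b_j \neq m$ (as $m$ is the $\mex$). But then $m$ appears among the options $\{0, 1, \ldots, b_j - 1\}$ of $b_j$, so each extra option of $G$ reverts to~$m$, as required.

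Finally I check the proviso~(ii). Suppose $m \cong 0$, i.e., $m = 0$. Then $0$ is not among the $a_i$, so the hypothesis forces $1$ to appear. A direct computation shows $o^-(1) = \P$ (the unique move is to~$0$, from which the opponent cannot move and therefore wins under misère convention); hence in $G$ the next player can move to $1$ and win, giving $o^-(G) = \N$. The Simplification Theorem now delivers $G = m$. The only delicate point is the proviso---without the hypothesis that $0$ or $1$ appears among the options, the conclusion can genuinely fail: for instance $2_{\sh}$ has outcome $\P$ and is not equal to~$0$, even though $\mex\{2\} = 0$.
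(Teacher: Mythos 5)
Your proof is correct and takes exactly the route the paper intends: the Mis\`ere Mex Rule is presented there as ``a simple but useful application'' of the Grundy--Smith simplification theorem, which is precisely how you argue (extra options $b_j>m$ revert through $\Star m$, and the proviso is discharged by the presence of $0$ or $\Star$ among the options). The verification of clause~(i), the outcome computation $o^-(\Star)=\mathscr{P}$, and the counterexample $2_\sh$ showing the proviso hypothesis is necessary are all accurate.
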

(Here as always, $\mex(X)$ denotes the \textbf{m}inimal \textbf{ex}cluded value of~$X$: the least $m \geq 0$ with $m \not\in X$.)

\subsection*{The Mate of $G$}

\begin{definition}
The \defn{mate} of $G$, denoted by $G^-$, is defined by
\[G^- = \begin{cases}
1 & \textrm{if $G \cong 0$}; \\
\{(G')^- : G' \in G\} & \textrm{otherwise}.
\end{cases}\]
\end{definition}

\begin{proposition}
$G + G^-$ is a (mis\`ere) \P-position.
\end{proposition}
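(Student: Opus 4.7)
The plan is a straightforward induction on the formal birthday $\tilde{\bd}(G)$, exploiting the recursive definition of $G^-$ to set up a symmetric strategy-stealing argument.

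For the base case $G \cong 0$, I would compute directly: $G^- = 1$, so $G + G^- = 1$. The only move from $1$ is to $0$, and $0$ is a terminal position which is a misère $\N$-position (the player to move cannot move and therefore wins under misère convention). Hence $1$ is a $\P$-position, as required.

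For the inductive step, suppose $G \not\cong 0$ and the result holds for every game of smaller formal birthday. Then $G^- \cong \{(G')^- : G' \in G\}$, so the options of $G + G^-$ come in two flavors: either $G' + G^-$ for some $G' \in G$, or $G + (G')^-$ for some $G' \in G$. In either case the player who just moved has left the opponent a position from which there is a legal ``copycat'' reply back to $G' + (G')^-$, and by the induction hypothesis $G' + (G')^-$ is a $\P$-position. Therefore every option of $G + G^-$ is an $\N$-position, which is exactly what it means for $G + G^-$ to be a $\P$-position.

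There is no real obstacle here; the only thing to watch is that the strategy-stealing reply actually exists, i.e., that $(G')^-$ genuinely is an option of $G^-$ and that $G'$ genuinely is an option of $G$. This is immediate from the definition of $G^-$ in the non-terminal case, and the terminal case is covered separately by the base case above. The proviso plays no role because $G + G^-$ is not identically $0$ in the inductive step (both summands have options) and equals $1$ in the base step.
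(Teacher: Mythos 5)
Your proof is correct and is essentially the standard argument: the paper states this proposition without proof (deferring to the cited reference), and the usual proof there is exactly your copycat induction, mirroring $G \to G'$ with $G^- \to (G')^-$ and vice versa until the base position $0 + 1 = \Star$, a \P-position, is reached. You correctly handle the two misère-specific points—that $0$ is an \N-position and that $G + G^-$ is non-terminal in the inductive step—so there is no gap.
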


As an easy corollary:

\begin{proposition}
\label{prop:starneq}
$G \neq G + 1$, for every game $G$.
\end{proposition}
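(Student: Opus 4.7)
The plan is to argue by contradiction using the previous proposition that $G + G^-$ is a $\P$-position. Suppose toward contradiction that $G = G + 1$. Adding $G^-$ to both sides (equality of mis\`ere values is a congruence on~$\mathcal{M}$), we get
\[
G + G^- \;=\; (G + G^-) + 1.
\]
Writing $X = G + G^-$, the previous proposition tells us that $X$ is a $\P$-position, so $X$ has outcome $\P$. Since equal values share the same outcome (take the testing game to be $0$ in the definition of equality), it suffices to produce a contradiction by showing that $X + 1$ has outcome $\N$.

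For this, I would note two things about any mis\`ere $\P$-position~$X$. First, $X \not\cong 0$, since $0$ itself is an $\N$-position in mis\`ere (the player to move cannot move and thus does not make the last losing move). Second, in $X + 1 = X + \Star$ the second summand always admits the move $\Star \to 0$, which transforms $X + \Star$ into $X + 0 \cong X$, a position of outcome~$\P$. Hence the player to move in $X + 1$ has a move to a $\P$-position, so $X + 1$ has outcome~$\N$, contradicting the conclusion of the previous paragraph.

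There is no real obstacle here beyond being careful with the mis\`ere convention on the empty game; the whole argument rests on the previous proposition plus the single observation that adding $\Star$ to a $\P$-position turns it into an $\N$-position. Nothing has to be said about the internal structure of $G$, which is what makes $G^-$ a convenient universal ``partner'' to work with.
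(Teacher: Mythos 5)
Your proof is correct and is precisely the argument the paper intends: Proposition~\ref{prop:starneq} is presented there as an immediate corollary of the fact that $G+G^-$ is a $\P$-position, and your chain ($G=G+1$ would force $G+G^-$ and $G+G^-+1$ to share an outcome, yet the former is $\P$ while the latter is $\N$ because the move $\Star\to 0$ lands on a $\P$-position) is exactly the standard derivation.
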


\subsection*{The Simplest Form Theorem}
\label{section:simplestform}

Now suppose $G$ has an option $G'$, which in turn has an option $G'' = G$. We say that $G'$ is \defn{reversible through}~$G''$. The Simplest Form Theorem can be stated like so:
\begin{theorem}[Simplest Form Theorem]
Suppose that neither $G$ nor $H$ has any reversible options, and assume that $G = H$. Then $G \cong H$.
\end{theorem}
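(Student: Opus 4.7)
I would proceed by strong induction on $\tilde{\bd}(G) + \tilde{\bd}(H)$. The base case $G \cong H \cong 0$ is immediate. For the inductive step, assume $G, H$ are in simplest form with $G = H$ and aim to show their option sets coincide up to $\cong$. By symmetry it suffices, for each $G' \in G$, to exhibit some $H' \in H$ with $G' \cong H'$.

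The central tool is the mate. Since $G = H$, for every $X$ we have $o^-(G + X) = o^-(H + X)$. Apply this with $X = (G')^-$. By the Mate Proposition, $G' + (G')^-$ is a $\P$-position, so $G + (G')^-$ is an $\N$-position, and therefore so is $H + (G')^-$. Any winning reply from $H + (G')^-$ must take one of two forms: (a) a move in $H$ to $H' + (G')^-$ which is $\P$, for some $H' \in H$; or (b) a move in $(G')^-$ to $H + (G'')^-$ which is $\P$, for some $G'' \in G'$.

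The plan is then to rule out case (b) and to extract the desired match from case (a). Case (b) should force $G'' = H = G$, which would make $G'$ reversible through $G''$ in $G$, contradicting the simplest-form hypothesis. Case (a) should force $H' = G'$ as values, and together with the inductive hypothesis applied at strictly smaller birthday sum, yield $H' \cong G'$. Carrying this out for every option (in both directions) would then give $G \cong H$.

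The main obstacle is the auxiliary implication ``$A + B^- = \P$ implies $A = B$'' that underlies both steps. This is not obvious in mis\`ere play: a $\P$-position does not act as zero (so the identities $A + B^- = \P$ and $B + B^- = \P$ do not formally cancel), and as the paper will later show in Section~\ref{section:concubines}, the mate itself is not invariant under equality, so one cannot freely substitute equal representatives inside the expression $B^-$. I would prove this auxiliary claim jointly with the main theorem by simultaneous induction on the formal birthday sum, unwinding $A + B^- = \P$ along the game tree and invoking the Simplification Rule to convert the reversibility obtained in sub-analyses into value equalities. A secondary subtlety to watch out for is that an option of a simplest-form game need not itself be in simplest form, so bridging the value-equality $G' = H'$ supplied by case (a) to the required $\cong$-equivalence will need either an extra reduction to a common simplest form or a carefully strengthened inductive statement that simultaneously tracks matched options up to $\cong$.
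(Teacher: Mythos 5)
The skeleton of your argument (for each $G' \in G$, rule out reversion back to $G$ and match $G'$ with some $H'$, then induct) is the right one, but the engine you propose to drive it---the mate---cannot do the job, and you have correctly located but underestimated the obstacle. The auxiliary implication ``$A + B^- \in \P$ implies $A = B$'' is not merely non-obvious; it is false, and Section~\ref{section:concubines} of this very paper supplies a counterexample. Take $B = (2_{\sh\sh}1)_\sh$, which is in simplest form; its mate $B^- = (2_{\sh\sh}0)_\sh$ is \emph{equal to $0$} as a value, so $o^-(A + B^-) = o^-(A)$ and hence $A + B^-$ is a $\P$-position for \emph{every} $\P$-position $A$, e.g.\ $A = 1$, which is certainly not equal to $B$. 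Both $A$ and $B$ here are in canonical form, so no side condition available in your induction rescues the claim, and a simultaneous induction cannot establish a false statement. Concretely, in your case~(a) the existence of a winning reply to $H' + (G')^-$ landing in $\P$ does not force $H' = G'$, and in case~(b) the reply $H + (G'')^-$ being a $\P$-position does not force $G'' = H$.

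The repair is to replace the mate by the linkage machinery stated alongside the theorem. Since $G = H$, Theorem~\ref{theorem:likeiffnotlinked}(ii) gives $G' \not\linked H$ for each $G' \in G$; Lemma~\ref{lemma:linkediffunlike} then yields either $G'' = H$ for some $G'' \in G'$---which makes $G'$ reversible through $G''$ (as $H = G$), contradicting the hypothesis---or $G' = H'$ for some $H' \in H$, and induction (reading ``no reversible options'' hereditarily, as the definition of canonical form intends) gives $G' \cong H'$. This is the standard proof, to which the paper defers by citing \cite{Sie13} rather than proving the theorem itself. The single mate $(G')^-$ is too weak a certificate to serve as the distinguishing game $T$ in the definition of linkage; the proof of Lemma~\ref{lemma:linkediffunlike} requires a more elaborate construction precisely because of examples like the one above. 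Your closing worry about passing from the value equality $G' = H'$ to $G' \cong H'$ is, by contrast, not a real obstacle: it is exactly what the inductive hypothesis supplies once the options are themselves in canonical form.
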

If $G$ has no reversible options, then we say that $G$ is in \defn{canonical form} (or \defn{simplest form}). It is a remarkable fact that reversible moves can only arise in the context of Grundy--Smith simplification.
\begin{theorem}
Suppose that every option of $H$ is in canonical form and that some option of $H$ is reversible through~$G$. Then $H$ simplifies to~$G$.
\end{theorem}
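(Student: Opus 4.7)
The plan is to set up notation, propagate canonical form, partition the options of $H$, and identify a specific subset of $H$'s options with $G$'s canonical options via the Simplest Form Theorem. Write $H \cong \{A, B_1, \ldots, B_m\}$, where $A$ is the reversible option: by definition, $G \in A$ identically and $G = H$ as values. All options of $H$ are in canonical form, and so is $G$, since canonical form is hereditary (every subposition of a canonical game is canonical, by a straightforward induction on formal birthday). Let $G \cong \{G_1', \ldots, G_k'\}$, and partition the remaining options of $H$ into $S = \{B_i : G \in B_i \text{ identically}\}$ and $T = \{B_1, \ldots, B_m\} \setminus S$. The goal is to show that $T$ coincides with $\{G_1', \ldots, G_k'\}$ as a multiset; granted this, $H$'s options decompose as $\{G_1', \ldots, G_k'\} \cup (\{A\} \cup S)$ with every element of $\{A\} \cup S$ containing $G$ identically, which is precisely the structural condition for $H$ to simplify to $G$.

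The central technical step is to compare the game $K \cong \{T\}$ with $G$. I would establish (a) $K = G$ as values, and (b) $K$ is in canonical form; the Simplest Form Theorem then yields $K \cong G$, whence $T = \{G_1', \ldots, G_k'\}$. For (a), it suffices to show $K = H$. Each option removed from $H$ in forming $K$---namely $A$ and each $B_i \in S$---contains $G = H$ identically. This permits a strong induction on the game tree of an arbitrary test game $X$ to show $o^-(K + X) = o^-(H + X)$: the key mechanism is that any move from $H + X$ into a removed option $Y + X$ can be countered by $Y + X \to G + X$, a position whose value equals $H + X$ and whose game tree is strictly smaller, allowing the induction to proceed. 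For (b), the options of $K$ are canonical by hypothesis; if some $B_i \in T$ had an option $C$ with $C = K$, then $C$ (a canonical subposition of $B_i$) would satisfy $C \cong G$ by Simplest Form applied to $C$ and $G$ (both canonical, value-equal via $C = K = G$), placing $G \in B_i$ identically and contradicting $B_i \in T$.

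Finally, I would discharge the proviso: if $G \cong 0$, then $o^-(H) = \N$ must hold. Under the hypothesis, this reduces to a short outcome analysis using the fact that $A$ contains $0$ identically; alternatively, one may observe that $G \cong 0$ together with $G = H$ would force $H$ to have value $0$, which (given that $H$ possesses a nontrivial option) is precluded by the structure of the mis\`ere monoid, so the case never arises.

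The main obstacle is the value equality $K = H$. In normal play this is the familiar reversibility rule, but in mis\`ere play the outcome-function induction must be handled carefully, and cannot be shortened to an appeal to the Simplification Theorem: such an appeal would be circular, since the removed options of $H$ contain $G$, not $K$, identically as an option. A secondary subtlety is the hereditary propagation of canonical form from $A$ down to $G$, which is implicit in the paper's conventions but can be made explicit by a simultaneous induction on formal birthday.
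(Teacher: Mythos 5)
This theorem sits in the Prerequisites section, which the paper explicitly states without proof (deferring to \cite[Sections V.1--V.3]{Sie13}), so there is no in-paper proof to compare against; judged on its own terms, your proposal has a genuine gap at exactly the step you flag as ``the main obstacle.'' Your plan hinges on proving $K = H$ (equivalently $K = G$, since reversibility already gives $G = H$) by a direct induction on a test game $X$, with the ``key mechanism'' that a move to a removed option $Y + X$ is countered by $Y + X \to G + X$. But trace the induction: to show $o^-(K+X) = \P$ implies $o^-(H+X) = \P$, you must show each removed option $Y+X$ is an \N-position, and the only reply you have is to $G+X$, whose outcome is $o^-(G+X) = o^-(H+X)$ --- precisely the quantity being established. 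The ``strictly smaller game tree'' of $G+X$ does not rescue this, because your induction runs over $X$, not over the left-hand component, and there is no induction hypothesis relating $o^-(G+X)$ to $o^-(K+X)$ at the same $X$. The converse direction has the mirror-image problem: if the unique \P-option of $H+X$ is some removed $Y+X$, nothing in your setup produces a \P-option of $K+X$. This is not a matter of ``handling the induction carefully''; the circularity is structural, and it is the same difficulty that makes the Grundy--Smith simplification theorem nontrivial. Separately, your fallback for the proviso is wrong: the case $G \cong 0$ does arise (e.g.\ $H \cong (2_{\sh\sh}0)_\sh$, whose unique option reverses through $0$), and a game with options can certainly have value $0$. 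The correct observation is simply that $G = H$ and $G \cong 0$ force $o(H) = o(0) = \N$.

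The standard argument avoids the outcome induction entirely by exploiting the equality $G = H$ through the machinery the paper has already set up. Since $G = H$, Theorem~\ref{theorem:likeiffnotlinked} says no option of $G$ is linked to $H$ and $G$ is linked to no option of $H$; Lemma~\ref{lemma:linkediffunlike} then unpacks each non-linkage. For an option $G'$ of $G$: either some $G'' = H = G$ (impossible, as $G$ is canonical) or $G'$ equals some option of $H$ in value, and since both are canonical the Simplest Form Theorem upgrades this to identity --- so every option of $G$ is literally an option of $H$. For an option $H^*$ of $H$ not identical to any option of $G$: either some $G' = H^*$ (excluded as before, by canonicity and Simplest Form) or $G$ equals, hence is identical to, an option of $H^*$ --- so $H^*$ carries the reverting move to $G$. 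Together with the trivial proviso check this is exactly the definition of ``$H$ simplifies to $G$,'' and your set $T$ is seen to coincide with the options of $G$ as a corollary rather than as the engine of the proof. I recommend you restructure around Lemma~\ref{lemma:linkediffunlike} and Theorem~\ref{theorem:likeiffnotlinked} rather than attempting to prove $K = H$ from scratch.
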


As in the partizan theory, a constructive test for equality is an essential ingredient in the proof of the Simplest Form Theorem. The details of this constructive test will be independently important, so we review them here as well.

\begin{definition}
We say $G$ is \emph{linked to $H$ (by $T$)}, and we write $G \linked H$, if
\[o(G+T) = o(H+T) = \P\]
for some $T$.
\end{definition}

\begin{lemma}
\label{lemma:linkediffunlike}
$G$ is linked to $H$ if and only if $G =$ no $H'$ and no $G' = H$.
\end{lemma}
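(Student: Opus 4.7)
The forward direction is direct. Assume $T$ witnesses $G \linked H$, so $o(G+T) = o(H+T) = \P$. Were some $G' \in G$ equal to $H$, the move $G+T \to G'+T$ would reach a position of value $H+T$ and outcome $\P$, making $G+T$ itself an $\N$-position, a contradiction. The dual argument rules out $H' = G$.

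For the reverse direction I would induct on $\tilde{\bd}(G)+\tilde{\bd}(H)$ and construct a linking $T$ explicitly. The base case $G \cong H \cong 0$ is handled by $T=1$: the sole move is to $0$, leaving the opponent unable to move (and so the winner in mis\`ere play), so $o(1)=\P$. For the inductive step, the condition $o(G+T)=o(H+T)=\P$ unpacks into four families of constraints: $o(G'+T)=\N$ for each $G'\in G$, $o(H'+T)=\N$ for each $H'\in H$, and $o(G+T')=\N$ and $o(H+T')=\N$ for each $T'\in T$. The last two are ``internal'' and will be satisfied by how I design the options of $T$; the first two are ``external'' and must be forced by the option hypothesis together with the inductive hypothesis.

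The external constraints split by sub-case analysis on each $G' \in G$ (and symmetrically each $H'$). If some $G'' \in G'$ satisfies $G''=H$, then $G''+T$ has value $H+T$, giving $G'+T$ an option of outcome $\P$ and so $o(G'+T)=\N$ automatically. If some $H' \in H$ has $H'=G'$, then $G'+T = H'+T$, and $H'+T$ is an option of the $\P$-position $H+T$, again forcing $o(G'+T)=\N$. In the remaining case---no $G'' \in G'$ with $G''=H$ and no $H' \in H$ with $H'=G'$---the inductive hypothesis applied to the strictly smaller pair $(G',H)$ says precisely that $G' \linked H$, witnessed by some $T_{G'}$. I would splice each such $T_{G'}$ (and the symmetric $T_{H'}$) into $T$ via the sharp operator---for example by including $(T_{G'})_\sh$ among the options of $T$---so that the opponent's reply from $G'+T$ is pinned onto $T_{G'}$ itself, and the linking witness can then be exploited to produce a winning response.

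The main obstacle is the amalgamation: organizing the witnesses $T_{G'}$, $T_{H'}$ into a single $T$ whose options simultaneously serve every external $G'$ and $H'$ while also satisfying the internal conditions $o(G+T')=o(H+T')=\N$ for every $T' \in T$. Once the construction is fixed, the verification is a routine case analysis over the options of $G+T$ and $H+T$, invoking the inductive hypothesis at each step; some care is required to respect the proviso in the Grundy--Smith simplifications that arise when sharps or mates produce $0$ options, but I anticipate no conceptual difficulty beyond bookkeeping.
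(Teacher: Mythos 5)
Your forward direction is correct and standard: if some $G'=H$, then $o(G'+T)=o(H+T)=\P$, so the $\P$-position $G+T$ would have a $\P$-option, which is impossible. Note also that the paper itself states Lemma~\ref{lemma:linkediffunlike} in its Prerequisites section without proof, deferring to~\cite[Sections V.1--V.3]{Sie13}, so your argument has to stand entirely on its own.

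It does not, because the reverse direction---which is the whole content of the lemma---is never actually carried out: the construction of the witness $T$ is the hard part, and you explicitly defer the ``amalgamation'' as ``the main obstacle.'' Worse, the one concrete device you offer fails. Putting $T_{G'}$ itself among the options of $T$ is impossible, since $H+T_{G'}$ would then be an option of $H+T$ with $o(H+T_{G'})=\P$, forcing $o(H+T)=\N$. Your proposed repair, using $(T_{G'})_\sh$ instead, does not help: to certify $o(G'+T)=\N$ you must exhibit a move from $G'+T$ to a $\P$-position, but $G'+(T_{G'})_\sh$ has the $\P$-position $G'+T_{G'}$ as an option and is therefore itself an $\N$-position, so moving there loses. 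There is also an unaddressed circularity in your ``external'' cases: you justify $o(G'+T)=\N$ by appealing to $o(H+T)=\P$ (when some $G''=H$) and to ``$H'+T$ is an option of the $\P$-position $H+T$'' (when $G'=H'$), both of which presuppose the very conclusion the witness is being built to achieve; this can be repaired by a joint induction on the formal birthdays of the compound positions, but it must be set up explicitly. Until a specific $T$ is written down and verified simultaneously against all of the constraints $o(G'+T)=\N$, $o(H'+T)=\N$, and $o(G+T')=o(H+T')=\N$ (including the degenerate cases where $G$ or $H$ is $\cong 0$ and the endgame convention matters), the reverse implication remains unproven.
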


\begin{theorem}
\label{theorem:likeiffnotlinked}
$G = H$ if and only if the following four conditions hold:
\begin{enumerate}
\item[(i)] $G$ is linked to no $H'$.
\item[(ii)] No $G'$ is linked to $H$.
\item[(iii)] If $G \cong 0$, then $H$ is an \N-position.
\item[(iv)] If $H \cong 0$, then $G$ is an \N-position.
\end{enumerate}
\end{theorem}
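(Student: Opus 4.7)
The plan is to handle the two directions separately. For the forward direction, I would assume $G = H$ (so $o(G+X) = o(H+X)$ for all $X$) and derive the four conditions. Setting $X \cong 0$ and recalling that $o(0) = \N$ yields (iii) and (iv) immediately. For (i), I would argue by contradiction: if $G \linked H'$ were witnessed by $T$ with $o(G+T) = o(H'+T) = \P$, then $o(H+T) = \P$ by the equation $G=H$; but $H'+T$ is an option of $H+T$, and every option of a $\P$-position is an $\N$-position, so $o(H'+T) = \N$, a contradiction. Condition (ii) is symmetric.

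For the reverse direction, I would assume (i)--(iv) hold and prove $o(G+X) = o(H+X)$ for every $X$ by induction on $\tilde{\bd}(X)$; since (i)--(iv) make no reference to $X$, the hypothesis carries through unchanged to every option $X' \in X$. Suppose for contradiction the outcomes differ at some $X$. By an obvious symmetry (interchanging (i)$\leftrightarrow$(ii) and (iii)$\leftrightarrow$(iv)), I may assume $o(G+X) = \P$ and $o(H+X) = \N$. Since $H+X$ is an $\N$-position, one of three situations must arise. First, $H+X$ could be terminal, i.e., $H \cong 0$ and $X \cong 0$; then (iv) forces $G$ to be an $\N$-position, yet $G+X = G$ has outcome $\P$---contradiction. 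Second, some $H' \in H$ could satisfy $o(H'+X) = \P$; then (i) yields $G \not\linked H'$, so Lemma~\ref{lemma:linkediffunlike} produces either an option $H'' \in H'$ with $G = H''$ (whence $o(H''+X) = o(G+X) = \P$, contradicting that $H''+X$ is an option of the $\P$-position $H'+X$), or an option $G' \in G$ with $G' = H'$ (whence $o(G'+X) = o(H'+X) = \P$, contradicting that $G'+X$ is an option of the $\P$-position $G+X$). Third, some $X' \in X$ could satisfy $o(H+X') = \P$; then the induction hypothesis gives $o(G+X') = o(H+X') = \P$, contradicting that $G+X'$ is an option of the $\P$-position $G+X$.

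The heavy lifting will be done by Lemma~\ref{lemma:linkediffunlike}, which converts the hypothesis ``$G$ is not linked to $H'$'' into a concrete structural alternative involving equations among $G$, $H'$, and their options---equations that then let us propagate $\P$-outcomes between $G+X$ and $H+X$. The main subtlety I anticipate is the edge case when $G \cong 0$ or $H \cong 0$: here the lemma has nothing to say on one side, and neither of the ``reversal'' arguments via options of $G$ or $H$ applies. This is precisely why clauses (iii) and (iv) must appear as separate provisos alongside (i) and (ii); once this is recognized, the remainder is routine case analysis.
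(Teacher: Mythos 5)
The paper states this theorem without proof (it sits in Section~2, whose results are all deferred to~\cite{Sie13}), so there is no in-paper argument to compare against; measured against the standard proof, your proposal is correct and follows essentially the same route: the forward direction via the observation that an option of a $\P$-position must be an $\N$-position, and the converse by induction on $X$ with the three-way case analysis on why $H+X$ is an $\N$-position. The only stylistic difference is in your second case: once you have $o(G+X)=o(H'+X)=\P$, the game $X$ itself already witnesses $G\linked H'$ and contradicts (i) directly, so the detour through Lemma~\ref{lemma:linkediffunlike} and its sub-cases, while sound, is unnecessary.
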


Clauses (iii) and (iv) are, of course, a restatement of the proviso.

\section{Concubines}

\label{section:concubines}

Conway introduced the \defn{mate} $G^-$ as a stepping stone to the Simplest Form Theorem. The terminology perhaps suggests invariance of form, and one might be tempted to suppose that $G = H$ implies $G^- = H^-$, that $G^{--} = G$, and so forth; but in this section we show that essentially all such assertions are false. These are not especially deep observations, but they do not appear to have been pointed out before.

As an example, let $G = (2_{\sh\sh}1)_\sh$. $G$ is easily seen to be in simplest form. However, $G^- = (2_{\sh\sh}0)_\sh$; and since this is an \N-position, the proviso is satisfied and the unique option $2_{\sh\sh}0$ reverses through~$0$.  Therefore $G^- = 0$.  Likewise, if $H \cong (2_{\sh\sh}0)_\sh$, then we have $H = 0$, but $H^- = (2_{\sh\sh}1)_\sh \neq 1$.

\begin{definition}
Suppose that $G$ and $H$ are in simplest form.  We say that $H$ is a \defn{concubine} of $G$ if $H^- = G$, but $G^- \neq H$.
\end{definition}

\begin{proposition}
Every game has a concubine.
\end{proposition}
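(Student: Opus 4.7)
My plan is to proceed by induction on the birthday of $G$, using the preceding example to supply the base case and a recursive construction for the inductive step. The general strategy is: given $G$ in simplest form, build a form $H$ whose options are concubines of $G$'s options, so that taking mates recovers $G$ (values matching options) while $H$ itself avoids coinciding with $G^-$.

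For the base case, $G \cong 0$, take $H = (2_{\sh\sh}1)_\sh$. The text immediately before the proposition already verifies that $H$ is in simplest form, $H^- = 0$ value-wise, and $H \neq 1 = 0^-$ value-wise, so $H$ is a concubine of $0$.

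For the inductive step, assume every game of smaller birthday has a concubine, and let $G \cong \{G_1', \ldots, G_k'\}$ be in simplest form with $G \not\cong 0$. By the inductive hypothesis, each $G_i'$ admits a concubine $C_i$, so that $C_i^- = G_i'$ and $(G_i')^- \neq C_i$ value-wise, with $C_i$ in simplest form. I would then define
\[
H \;\cong\; \{C_1, C_2, \ldots, C_k\}
\]
and claim that $H$ is a concubine of $G$. The key computation is that
\[
H^- \;\cong\; \{C_1^-, \ldots, C_k^-\},
\]
whose option-values match the multiset $\{G_1', \ldots, G_k'\}$ of options of $G$ by the concubine property. Since $G \not\cong 0$ and the provisos are vacuous, Theorem~\ref{theorem:likeiffnotlinked} then yields $H^- = G$ value-wise. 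To see $H \neq G^-$, note that $G^-$'s option-values are $(G_i')^-$ and $H$'s are $C_i$, and the concubine property $C_i \neq (G_i')^-$ gives a witness $C_i$ that is linked to $G^-$ by Lemma~\ref{lemma:linkediffunlike}; so condition~(i) of the linking theorem fails for $H = G^-$. Finally, the simplest-form condition for $H$ reduces to checking that no $C_i$ is reversible in $H$, i.e., that no option of any $C_i$ has value equal to $H$ — this fails on birthday grounds, since options of $C_i$ lie strictly below $H$ in the hierarchy.

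The main obstacle is the second step: if the value of some $C_i$ accidentally coincides with some $(G_j')^-$ for $j \neq i$, the multiset of options of $H$ could match that of $G^-$ under a nontrivial permutation, and then the linking witness must be found more carefully (for instance, one still has at least one index $i$ where $C_i$ refuses to match any $(G_j')^-$, in which case the same argument goes through with that $i$). If such matching cannot be ruled out directly, I would strengthen the inductive hypothesis to produce, for each game, a concubine whose value has birthday exceeding any prescribed bound — this is achievable because concubines are never unique, and one can iterate the construction or apply a sharp-like operation to force the birthday up — and then choose the $C_i$'s with birthdays large enough to lie outside $\{(G_1')^-, \ldots, (G_k')^-\}$.
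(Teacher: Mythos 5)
Your construction is essentially the paper's own: the recursive form $H \cong \{C_1,\ldots,C_k\}$ built from concubines of the options, with $(2_{\sh\sh}1)_\sh$ at the base, and your computation that $H^- = G$ goes through. The genuine gap is in your verification that $H$ is in simplest form. You assert that no option of any $C_i$ can have value equal to $H$ ``on birthday grounds, since options of $C_i$ lie strictly below $H$ in the hierarchy.'' Lying below $H$ in the game tree bounds only the \emph{formal} birthday of such a position; what you need is a comparison with the value birthday $\bd(H)$, and you have no lower bound on $\bd(H)$ until you already know that $H$ is canonical. If $H$ were reducible, its value birthday could collapse far below $\tilde{\bd}(H)$, and a grandchild of $H$ could then perfectly well equal $H$ in value---this is exactly the phenomenon the whole section turns on: in $(2_{\sh\sh}0)_\sh$ the grandchild $0$ equals the whole game. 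So the argument is circular, and the step it skips is precisely where all the work lives.

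The paper closes this gap by taking a minimal counterexample $G$ with $c(G)$ reducible and analyzing what the reverting move could be. Since every option of $c(G)$ has the specific form $c(G')$ and (by minimality) is canonical, the reverting target is either some $c(G'')$---in which case injectivity of $c$ on values (which follows from $c(X)^- = X$) pushes the simplification down to $G$ itself, contradicting the canonicity of $G$---or else the leaf option $2_{\sh\sh}1$, which would force $1$ to be an option of $c(G)$, impossible since every option has the form $c(G')$. (This analysis leans on the fixed recursive choice of concubine; with arbitrary $C_i$, as in your version, even this route is not available as stated. The proviso is checked via $o(c(G)) = o(G)$, which your argument never addresses.) Note also that your proposed strengthening of the induction, producing concubines of arbitrarily large birthday, targets only the $H \neq G^-$ clause and does nothing for the canonicity gap; and for $H \neq G^-$ no multiset analysis is needed, since once $H$ is known to be canonical one has $\bd(H) = \tilde{\bd}(H) = \tilde{\bd}(G) + 6 > \tilde{\bd}(G^-) \geq \bd(G^-)$.
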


\begin{proof}
For $G$ in simplest form, we define a game $c(G)$ recursively by
\[c(G) = \begin{cases}
(2_{\sh\sh}1)_\sh & \textrm{if $G = 0$} \\
\{c(G')\} & \textrm{otherwise}
\end{cases}\]
Since the mate of $(2_{\sh\sh}1)_\sh$ is equal to $0$, it is immediately clear that $c(G)^- = G$.  It remains to be shown that $c(G)$ is in simplest form. Suppose (for contradiction) some $c(G)$ is not, and choose $G$ to be a minimal counterexample. Then $c(G)$ simplifies to some game~$H$.

If $c(G')' = c(G'')$ in all cases, then $G$ is obtained from $G''$ by adding reversible options, contradicting the assumption that $G$ is in canonical form.  (The proviso is clearly satisfied, since it is easily seen that $o(G) = o(c(G))$.)  Otherwise, we must have some $G' = 0$, and $H = c(G')' = 2_{\sh\sh}1$.  But this means $c(G)$ must have $1$ as an option.  Since every option of $c(G)$ has the form $c(G')$, and since each $c(G')$ is canonical (by minimality of $G$), this gives the desired contradiction.
\end{proof}

So, for example, $(2_{\sh\sh}1)_\sh$ itself has a concubine, namely
\[(2_{\sh\sh}(2_{\sh\sh}1)_{\sh\sh})_\sh,\]
and indeed there are arbitrarily long chains $G$, $G^-$, $G^{--}$, $G^{---}$, $\ldots$ of distinct games (where it is understood that at each iteration we pass to the canonical form).

\suppressfloats[t]
\section{Games Born By Day $n$}

\label{section:dayn}

A central goal in the structure theory of (any particular class of) combinatorial games is to count the number of game values born by day~$n$. In the most familiar case---normal-play partizan games---and many others, obtaining exact counts is a tedious combinatorial problem, with the only known techniques requiring an exhaustive enumeration of the values being counted. In the impartial mis\`ere theory, by contrast, the simplification rule is surprisingly rigid, and we can write down a recurrence relation for $|\mathcal{M}_n|$ that depends only on an enumeration of $\mathcal{M}_{n-2}$.

\begin{figure}
\[
\begin{array}{c@{\quad=\quad}r}
|\mathcal{M}_0| & 1 \bigstrut \\
|\mathcal{M}_1| & 2 \bigstrut \\
|\mathcal{M}_2| & 3 \bigstrut \\
|\mathcal{M}_3| & 5 \bigstrut \\
|\mathcal{M}_4| & 22 \bigstrut \\
|\mathcal{M}_5| & 4171780
\end{array}
\]

\[
\begin{array}{c@{\quad=\quad}c}
|\mathcal{M}_6| &
\text{\scriptsize$\begin{array}{l@{~}l@{~}l@{~}l@{~}l@{~}l@{~}l@{~}l}
2^{4171780} &
-~2^{2096640} & -~2^{2095104} & -~2^{2094593} & -~2^{2094080} & -~2^{2091523} & -~2^{2091522} \\ &
-~2^{2088960} & -~2^{2088705} & -~2^{2088448} & -~2^{2088193} & -~2^{2086912} & -~2^{2086657} \\ &
-~2^{2086401} & -~2^{2086145} & -~2^{2085888} & -~2^{2079234} & +~2^{1960962} & +~21
\end{array}$}
\end{array}
\]
\caption{The first few values of $|\mathcal{M}_n|$.}
\label{figure:smallbirthday}
\end{figure}

The values $|\mathcal{M}_0|$ through $|\mathcal{M}_5|$ were first calculated by Grundy and Smith in 1956~\cite{GS56}, although $|\mathcal{M}_5| = 4171780$ was not \emph{proven} correct until Conway, armed with the Simplest Form Theorem, came along 20 years later. These initial values are summarized in Figure~\ref{figure:smallbirthday}. Conway added $|\mathcal{M}_6|$ to the list, although the figure initially given in ONaG contained errors; the corrected count, also shown in Figure~\ref{figure:smallbirthday}, was first reported by Chris Thompson in 1999~\cite{Tho99}. In this section, we give \emph{ad hoc} derivations of each of these numbers, then extend the list one step further to compute the exact value of $|\mathcal{M}_7|$.


\subsection*{Games Born By Day $4$}

The earliest games can be enumerated by inspection. The only games born by day $2$ are $0$, $1$, $2$, and $1_\sh$, but $1_\sh = 0$ by the Mex Rule, giving $|\mathcal{M}_2| = 3$. On day~$3$, we have additionally $3$, $20$, $21$, and $2_\sh$, but $20$ and $21$ are reducible (also by the Mex Rule), so there are just two new games, giving $|\mathcal{M}_3| = 5$.

On day~$4$, consider the $2^5$ subsets of $\mathcal{M}_3$. $2^4$ of them contain only nimbers; the remaining $2^4$ have $2_\sh$ as an option. For the subsets containing only nimbers, the canonical games are
\[0,\ 1,\ 2,\ 3,\ 4,\ 2_\sh,\ 3_\sh, \text{ and } 32.\]
(All other combinations of nimbers contain either $0$ or $1$, and therefore reduce to a nimber by the Mex Rule).

Now suppose $2_\sh \in G$ and $G$ is reducible. $G$~must simplify to~$2$, since $2$ is the only option of~$2_\sh$; therefore $0 \in G$ and $1 \in G$, and all other options of $G$ must contain~$2$. The only possibilities are $2_\sh10$ and $2_\sh310$. So of the $2^4$ subsets containing $2_\sh$, two are reducible, and the other fourteen are not. Therefore
\[
|\mathcal{M}_4| = 8 + 14 = 22.
\]

\suppressfloats[t]
\subsection*{Games Born By Day $5$}

There are $2^{22}$ subsets of $\mathcal{M}_4$. To compute $|\mathcal{M}_5|$, we subtract from this total the number of \emph{reducible} subsets of $\mathcal{M}_4$. Now if $H \subset \mathcal{M}_4$ is reducible, then it must simplify to some other game $G \not\cong H$. If we take $G$ to be in simplest form, then by the Simplest Form Theorem it is uniquely determined. Moreover, there must be at least one $H' \in H$ with $G \in H'$, so that necessarily $G \in \mathcal{M}_3$.

This shows that every reducible $H \subset \mathcal{M}_4$ is obtained from a \emph{unique} $G \in \mathcal{M}_3$ by adding reversible moves, so that $H$ has the form
\[
H \cong G \cup \{H_1',\ldots,H_k'\},
\]
with $G \in$ each $H_i'$.

\begin{figure}
\[\begin{array}{c|c|r}
G & |\mathcal{S}_4^G| & \text{Adjustment} \\
\hline
2_\sh & 14 & -2^{14}+1 \bigstrut[t] \\
3 & 10 & -2^{10}+1 \\
2 & 12 & -2^{12}+1 \\
1 & \zsp9 & -2^9+1 \\
0 & 10 & -2^{10}+1 \\
\text{(Proviso)} & & +2^9-1 \\
\hline
\end{array}\]

\[2^{22} - 2^{14} - 2^{10} - 2^{12} - 2^9 - 2^{10} + 2^9 + 4 = 4171780\]
\vspace{-0.5cm}
\caption{The number of games born by day 5. \label{figure:day5}}
\end{figure}

The calculation is summarized in Figure~\ref{figure:day5}. For each $G \in \mathcal{M}_3$, let
\[\mathcal{S}_4^G = \{H \in \mathcal{M}_4 : G \in H\}.\]
When $G \not\cong 0$, then the added reversible moves $\{H_1',\ldots,H_k'\}$ can be \emph{any} nonempty subset of $\mathcal{S}_4^G$, so exactly $2^{|\mathcal{S}_4^G|} - 1$ games simplify to~$G$.

When $G \cong 0$, then the proviso requires additionally that $o(H) = \N$, so that at least one of $H_1',\ldots,H_k'$ must be a \P-position. So if $H_1',\ldots,H_k'$ are all \N-positions with $0 \in H_i'$, then in fact $H$ is canonical. We must therefore \emph{add back} such subsets into the count. This gives rise to the additional ``proviso'' term in Figure~\ref{figure:day5}; the exponent is the count of \N-positions in $\mathcal{M}_4$ that contain $0$ as an option.

\begin{sidewaysfigure}
\newcommand{\ntwon}[1]{-2^{#1} + 1}
\[\begin{array}{c|c@{~}c@{~}c@{~}c@{~}c@{~}c@{~}c@{~}c@{~}c@{~}c@{~}c|r}
G & \multicolumn{11}{c|}{|\mathcal{S}_5^G|} & \multicolumn{1}{c}{\textrm{Adjustment}} \\
\hline
2_\sh3210 & 2^{21} & -~2^{13} & -~2^{11} & -~2^9 & -~2^8 & -~2^8 &&&&&& \ntwon{2085888} \bigstrut[t] \\
2_\sh321  & 2^{21} & -~2^{13} & -~2^{11} & -~2^9 & -~2^8 &       &&&&&& \ntwon{2086144} \\
2_\sh320  & 2^{21} & -~2^{13} & -~2^{11} & -~2^9 &       & -~2^8 &&&&&& \ntwon{2086144} \\
2_\sh32   & 2^{21} & -~2^{13} & -~2^{11} & -~2^9 &       &       &&&&&& \ntwon{2086400} \\
2_\sh31   & 2^{21} & -~2^{13} &          & -~2^9 & -~2^8 &       &&&&&& \ntwon{2088192} \\
2_\sh30   & 2^{21} & -~2^{13} &          & -~2^9 &       & -~2^8 &&&&&& \ntwon{2088192} \\
2_\sh3    & 2^{21} & -~2^{13} &          & -~2^9 &       &       &&&&&& \ntwon{2088448} \\
2_\sh210  & 2^{21} & -~2^{13} & -~2^{11} &       & -~2^8 & -~2^8 &&&&&& \ntwon{2086400} \\
2_\sh21   & 2^{21} & -~2^{13} & -~2^{11} &       & -~2^8 &       &&&&&& \ntwon{2086656} \\
2_\sh20   & 2^{21} & -~2^{13} & -~2^{11} &       &       & -~2^8 &&&&&& \ntwon{2086656} \\
2_\sh2    & 2^{21} & -~2^{13} & -~2^{11} &       &       &       &&&&&& \ntwon{2086912} \\
2_\sh1    & 2^{21} & -~2^{13} &          &       & -~2^8 &       &&&&&& \ntwon{2088704} \\
2_\sh0    & 2^{21} & -~2^{13} &          &       &       & -~2^8 &&&&&& \ntwon{2088704} \\
2_{\sh\sh}& 2^{21} & -~2^{13} &          &       &       &       &&&&&& \ntwon{2088960} \\
32        & 2^{21} &          & -~2^{11} & -~2^9 &       &       &&&&&& \ntwon{2094592} \\
3_\sh     & 2^{21} &          &          & -~2^9 &       &       &&&&&& \ntwon{2096640} \\
4         & 2^{21} &          & -~2^{11} & -~2^9 & -~2^8 & -~2^8 &&&&&& \ntwon{2094080} \\
3         & 2^{21} &          & -~2^{11} &       & -~2^8 & -~2^8 &&&&&& \ntwon{2094592} \\
2_\sh     & 2^{21} &          & -~2^{11} &       &       &       &&&&&& \ntwon{2095104} \\
2         & 2^{21} &          &          &       & -~2^8 & -~2^8 & -(2^{14}\Neg 1) &         & -(2^{10}\Neg 1) & & & \ntwon{2079234} \\
1         & 2^{21} &          &          &       &       & -~2^9 &         & -(2^{12}\Neg 1) & -(2^{10}\Neg 1) & & & \ntwon{2091522} \\
0         & 2^{21} &          &          &       &       &      &         & -(2^{12}\Neg 1) & -(2^{10}\Neg 1) & -(2^9\Neg 1) & & \ntwon{2091523} \\
\text{(Proviso)} & 2^{21} &&&&&&& -(2^{12}\Neg 1) & -(2^{10}\Neg 1) & & -2^{17} & +2^{1960962}-1 \\
\hline
\end{array}\]

\begin{center}
$\begin{array}{c@{}c@{}l@{~}l@{~}l@{~}l@{~}l@{~}l@{~}l@{~}l}
|\mathcal{M}_6| & ~=~ & 2^{4171780} &
-~2^{2096640} & -~2^{2095104} & -~2^{2094593} & -~2^{2094080} & -~2^{2091523} & -~2^{2091522} \\ &&&
-~2^{2088960} & -~2^{2088705} & -~2^{2088448} & -~2^{2088193} & -~2^{2086912} & -~2^{2086657} \\ &&&
-~2^{2086401} & -~2^{2086145} & -~2^{2085888} & -~2^{2079234} & +~2^{1960962} & +~21
\end{array}$
\end{center}
\caption{The number of games born by day 6. \label{figure:day6}}
\end{sidewaysfigure}

\subsection*{Games Born By Day $6$}

The calculation of $|\mathcal{M}_6|$ proceeds similarly and is shown in Figure~\ref{figure:day6}. Just as before, we subtract from $2^{4171780}$ the number of reducible subsets of $\mathcal{M}_5$. Each reducible subset $H \subset \mathcal{M}_5$ is obtained from a unique $G \in \mathcal{M}_4$ by adding reversible moves, and for $G \not\cong 0$ there are exactly $2^{|\mathcal{S}_5^G|}-1$ such possibilities for~$H$, where
\[\mathcal{S}_5^G = \{H \in \mathcal{M}_5 : G \in H\}.\]
As before, the case $G \cong 0$ entails an additional ``proviso term,'' representing those subsets of $\mathcal{S}_5^0$ that are not reducible to $0$ because they are \P-positions.

The calculation of the critical exponents $|\mathcal{S}_5^G|$ is by recursive application of the same principle. For a given $G \in \mathcal{M}_4$, there are exactly $2^{21}$ subsets of $\mathcal{M}_4$ containing~$G$, so we subtract from $2^{21}$ the number of \emph{reducible} such subsets.

Figure~\ref{figure:day6} breaks down the calculation of $|\mathcal{S}_5^G|$ for each~$G$. For $H \subset \mathcal{M}_4$ with $G \in H$, there are two distinct ways that $H$ might be reducible:
\begin{enumerate}
\item[(i)] $H$~simplifies to some $G' \in G$, so that $G$ itself is reversible (together with various other options of~$H$); or
\item[(ii)] $H$~simplifies to some other $K \in \mathcal{M}_3$, with $G \in K$ (so that $G$ is \emph{not} reversible, but remains as an option of the simplified game~$K$).
\end{enumerate}
Case (i) yields one term for each $G' \in G$, and since $G' \in \mathcal{M}_3$, there are at most five such terms (the maximum is achieved for $G = 2_\sh3210$). Case (ii) requires that $G \in \mathcal{M}_2$; hence it is only relevant for $G = 0,1,2$, explaining the special structure of those three rows in Figure~\ref{figure:day6}.

The precise details of how the terms in Figure~\ref{figure:day6} are calculated are fairly subtle. In order not to distract from the flow of this paper, and since the calculation of $|\mathcal{M}_6|$ is only slightly easier than the general case, those details are deferred until Appendix~\ref{appendix:daynrecurrences}.

\suppressfloats[t]
\subsection*{Games Born By Day $7$}

\newcommand\bigpow[1]{
\text{\large 2}^{\left(\text{\scriptsize$
\begin{array}{@{~}l@{~}l@{~}l@{~}l@{~}l@{~}l@{~}l@{~}l@{~}l@{~}}
#1
\end{array}
$}
\right)
}}

\begin{figure}
\[
\begin{array}{c@{~}l}
& \bigpow{
2^{4171780} &
-~2^{2096640} & -~2^{2095104} & -~2^{2094593} & -~2^{2094080} & -~2^{2091523} & -~2^{2091522} \\ &
-~2^{2088960} & -~2^{2088705} & -~2^{2088448} & -~2^{2088193} & -~2^{2086912} & -~2^{2086657} \\ &
-~2^{2086401} & -~2^{2086145} & -~2^{2085888} & -~2^{2079234} & +~2^{1960962} & +~21
} \vspace{0.1in} \\
- & \bigpow{2^{4171779} & -~2^{2085887}} \vspace{0.1in} \\
- & \bigpow{2^{4171779} & -~2^{2086143} & +~1} \vspace{0.1in} \\
- & \bigpow{2^{4171779} & -~2^{2086143} & -~2^{2085887} +~1} \vspace{0.15in} \\
& \qquad \cdots \quad \text{\normalsize ($758656$ additional terms)} \quad \cdots \vspace{0.2in} \\
- & \bigpow{
2^{4171779} &
-~2^{2096640} & -~2^{2094592} & -~2^{2094080} & -~2^{2091522} & -~2^{2091521} & -~2^{2088448} \\ &
-~2^{2088193} & -~2^{2086400} & -~2^{2086145} & -~2^{2085888} & -~2^{2079233} & +~2^{1960961} & +~10
} \vspace{0.1in} \\
+ & \bigpow{
2^{4171779} &
-~2^{3926530} & -~2^{2094592} & -~2^{2094080} & -~2^{2088704} & -~2^{2088192} \\ &
-~2^{2086656} & -~2^{2086400} & -~2^{2086144} & -~2^{2085888} & -~2^{2079234} & +~9
} \vspace{0.1in} \\
+ & 4171779
\end{array}
\]
\caption{Partial expansion of the expression for $|\mathcal{M}_7|$.}
\label{figure:day7}
\end{figure}

The preceding arguments can be abstracted out into a set of recurrence relations for $|\mathcal{M}_n|$ that work for all~$n$. These relations (and a proof that they work) are given below in Appendix~\ref{appendix:daynrecurrences}. They show in particular that $|\mathcal{M}_7|$ has the form
\[
2^{|\mathcal{M}_6|} - 2^{a_1} - \cdots - 2^{a_k} + 2^b + 4171779,
\]
in which all the exponents $a_1,\ldots,a_k,b$ are close to $2^{4171779}$, with~$b$ (the ``proviso term'') somewhat smaller than the others. In the initial calculation, there are precisely $4171780$ terms $-2^{a_i}$, and by combining like terms we can reduce this number to $758660$. The resulting expression is obviously too large to publish in a journal article, but it is small enough to fit comfortably in computer memory, and is therefore easily computable. A partial expansion is given in Figure~\ref{figure:day7}.

The same method works in theory to compute $|\mathcal{M}_8|$ (and higher), but the expression for $|\mathcal{M}_8|$ would have a number of terms on the order of $|\mathcal{M}_6|$. In some sense, the chained powers of two in the expression for $|\mathcal{M}_n|$ encode the entire structure of $\mathcal{M}_{n-2}$, and so we have reached the practical limit of this calculation. We now turn our attention to the abstract structure of $\mathcal{M}$ itself.

\section{The Cancellation Theorem}

\label{section:cancellation}

The next order of business is to prove that $\mathcal{M}$ is cancellative:

\begin{theorem}[Cancellation Theorem]
\label{theorem:cancellation}
If $G + T = H + T$, then $G = H$.
\end{theorem}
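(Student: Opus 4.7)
The plan is to proceed by strong induction on $N = \tilde{\bd}(G) + \tilde{\bd}(H) + \tilde{\bd}(T)$, with the base case $N = 0$ trivial. For the inductive step I assume $G + T = H + T$ and aim to conclude $G = H$ via Theorem~\ref{theorem:likeiffnotlinked}. By symmetry, it suffices to verify condition (i)---no option $G' \in G$ is linked to $H$; condition (ii) is symmetric, and the provisos (iii) and (iv) reduce to routine outcome analyses in the corner cases $G \cong 0$ or $H \cong 0$.

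Suppose for contradiction that $G' \linked H$ for some option $G' \in G$. I claim that $G' + T \linked H + T$; this would contradict condition (i) of Theorem~\ref{theorem:likeiffnotlinked} applied to $G + T = H + T$, since $G' + T$ is an option of $G + T$. To establish this linking-promotion claim, I invoke Lemma~\ref{lemma:linkediffunlike}: it suffices to show that $G' + T$ differs from every option of $H + T$, and that every option of $G' + T$ differs from $H + T$. Splitting by type, the ``same-$T$'' inequalities $G' + T \neq H^* + T$ and $(G')^* + T \neq H + T$ follow immediately from the inductive hypothesis (cancellation at strictly smaller total birthday) combined with the inequalities $G' \neq H^*$ and $(G')^* \neq H$ guaranteed by $G' \linked H$ via Lemma~\ref{lemma:linkediffunlike}.

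The main obstacle lies in the ``cross-$T$'' inequalities $G' + T \neq H + T^*$ and $G' + T^* \neq H + T$ for options $T^* \in T$; these do not reduce directly to cancellation, because the $T$-summand differs on the two sides. To treat them I argue by a second contradiction: supposing $G' + T = H + T^*$, apply Theorem~\ref{theorem:likeiffnotlinked} to that equality. Among the options of $G' + T$ is $G' + T^*$, which must therefore not be linked to $H + T^*$. Expanding this non-linking via Lemma~\ref{lemma:linkediffunlike} produces one of four sub-cases: two reduce via the inductive hypothesis to $G' = H^*$ or $(G')^* = H$ (each contradicting $G' \linked H$), while the other two are smaller cross-$T$ equations of the form $G' + T^* = H + T^{**}$ with $T^{**} \in T^*$. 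These recursive cases terminate by a nested induction on $\tilde{\bd}(T) + \tilde{\bd}(T^*)$, which strictly decreases at each step, eventually falling into a same-$T$ case. The cross-$T$ analysis is the deepest part of the argument; everything else reduces cleanly to the main inductive hypothesis via Theorem~\ref{theorem:likeiffnotlinked} and Lemma~\ref{lemma:linkediffunlike}.
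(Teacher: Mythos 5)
Your handling of conditions (i) and (ii) of Theorem~\ref{theorem:likeiffnotlinked} is essentially sound. The cross-$T$ descent you describe is a legitimate (if more laborious) substitute for the device the paper actually uses, which is to carry the linking-promotion property ``$G \linked H$ implies $G + T \linked H + T$'' as part of the inductive hypothesis on $T$ (the paper packages this together with cancellation into the single predicate ``$T$ is cancellable''); with that hypothesis available for $T'$, the inequalities $G' + T \neq H + T'$ and $G' + T' \neq H + T$ follow in one step rather than by your nested descent. Either organization works for this half of the argument.

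The gap is your dismissal of provisos (iii) and (iv) as ``routine outcome analyses.'' They are the entire difficulty of the theorem. When $G \cong 0$ the hypothesis reads $T = H + T$, and you must show $o(H) = \N$; nothing about outcomes of sums in mis\`ere play lets you read this off. The paper in fact proves the stronger statement $H = 0$, and doing so consumes most of its proof: it requires (1) Lemma~\ref{lemma:partsofzero}, that $0$ and $1$ are the only parts of $0$; (2) the fact that every game has only finitely many parts, which must be carried as a \emph{separate inductive invariant} alongside cancellability (Lemma~\ref{lemma:cancellation}(b)), since it is proved using cancellability of smaller games and is then consumed in the proviso step; and (3) a three-way case analysis on how $T' \not\linked H + T$ can be realized, whose hardest branch iterates $T = H + T = H + H + T = \cdots$, invokes finiteness of parts to obtain $m \cdot H = n \cdot H$, cancels to get $(n-m) \cdot H = 0$, and concludes $H = 0$ or $1$ by Lemma~\ref{lemma:partsofzero} and then $H \neq 1$ by Proposition~\ref{prop:starneq}. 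Your induction on $\tilde{\bd}(G) + \tilde{\bd}(H) + \tilde{\bd}(T)$ carries no invariant from which any of this could be extracted, so the induction does not close at the proviso and the proof is incomplete as it stands.
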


The Cancellation Theorem was discovered by Conway in the 1970s, and it is stated without proof in ONaG. A proof has been published once before, in Dean Allemang's 1984 thesis \cite{All84}. Since the proof is fairly tricky and is essential to the succeeding analysis, we give a full exposition here.

\begin{definition}
$H$ is a \defn{part} of $G$ if $G = H + X$ for some $X$.  In this case we say that $H + X$ is a \defn{partition} of $G$ and $X$ is the \defn{counterpart} of $H$ in~$G$.
\end{definition}

\begin{lemma}[Conway]
\label{lemma:partsofzero}
$0$ and $1$ are the only parts of $0$.
\end{lemma}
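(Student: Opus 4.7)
The plan is to proceed by induction on the sum $\tilde{\bd}(G) + \tilde{\bd}(X)$, where $X$ is any witness for $G+X = 0$. After replacing both $G$ and $X$ by canonical forms (which preserves value and cannot increase $\tilde{\bd}$), the base case is immediate: if either summand is $\cong 0$, then $G = 0$ follows at once.

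For the inductive step, I would combine Theorem~\ref{theorem:likeiffnotlinked} with Lemma~\ref{lemma:linkediffunlike}: since $G + X = 0$, every option of $G + X$ must fail to be linked to $0$, which is equivalent to saying that every such option has a further option equal in value to~$0$. Applied to the option $G + X'$ of $G + X$, for each $X' \in X$, this yields a dichotomy. Either some $X'' \in X'$ satisfies $G + X'' = 0$, in which case the induction hypothesis gives $G \in \{0,1\}$ and we are done; or else some $G' \in G$ satisfies $G' + X' = 0$, and then the induction hypothesis, applied in both orderings by commutativity of~$+$, forces both $G'$ and $X'$ to have value in $\{0,1\}$. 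Running this analysis across every $X' \in X$ and invoking canonicity of $X$---noting in particular that $\{1\}$ is not canonical, since it simplifies to $0$ by the Mex Rule---I conclude that $X$ itself must have value $1$ or $2$.

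The value-$1$ case is dispatched using the identity $1 + 1 = 0$: starting from $G + 1 = 0 = 1 + 1$ and adding $1$ to both sides yields $G = 1$ directly. The real work lies in ruling out the value-$2$ case. There I would re-run the option analysis from $G$'s side: the options $G + 0 = G$ and $G + 1$ of $G + 2$ quickly force $0, 1 \in G$; and for any other $G' \in G$, the only surviving branch requires $G'' + 2 = 0$ for some $G'' \in G'$, which by the induction hypothesis must lie in $\{0,1\}$---both alternatives contradicting the computations $0 + 2 = 2$ and $1 + 2 = 3$. Hence $G = \{0,1\}$ literally, so that $G + X = 2 + 2$. A direct outcome calculation now shows $2 + 2$ is a \P-position (both of its option-values, $2$ and $3$, are \N-positions), whereas $0$ is an \N-position, giving the required contradiction.

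The one real obstacle is this elimination of the $G = X = 2$ case. It is the point at which the misère theory genuinely diverges from normal play---under which $2$ would be self-inverse---and it depends crucially on the fact that every nimber $n \geq 2$ is an \N-position in misère, which forces $2 + 2$ to be a \P-position and hence distinct from $0$.
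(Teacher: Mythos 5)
Your proof is correct and follows essentially the same route as the paper's: apply the linked/unlinked criterion to the options of $G+X=0$, conclude inductively that the options of $X$ are parts of $0$, hence $X\in\{0,1,2\}$, and eliminate $2$ by a direct computation. The only real variation is at the branch $G+X''=0$, which the paper rules out via canonicity of $X$ (it would force $X=X''$), whereas you absorb it into a birthday-sum induction hypothesis; both work, and neither argument invokes Cancellation, so there is no circularity.
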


\begin{proof}
Let $X + Y = 0$ be any partition of~$0$, and assume $X$ and $Y$ to be in simplest form. Now for every option $X' \in X$, we have $X' + Y \not\linked 0$. It cannot be the case that $X'' + Y = 0$, since this would imply
\[X = X + (X'' + Y) = X'' + (X + Y) = X'',\]
contradicting the assumption that $X$ is in simplest form.  Therefore $X' + Y' = 0$ for some $Y' \in Y$, and in particular $X'$ is a part of $0$.

By induction, we may assume that $X' = 0$ or~$1$, so that the only options of $X$ are $0$ and~$1$. This implies $X = 0$, $1$, or $2$. But by an identical argument, we also have that $Y = 0$, $1$, or $2$. Since none of $2+0$, $2+1$, or $2+2$ is equal to~$0$, we conclude that $X = 0$ or~$1$, and likewise for~$Y$.
\end{proof}

\begin{definition}
$T$ is said to be \emph{cancellable} if, for all $G$ and $H$,
\begin{enumerate}
\item[(i)] $G + T = H + T$ implies $G = H$; and
\item[(ii)] $G \linked H$ implies $G + T \linked H + T$.
\end{enumerate}
\end{definition}

\begin{lemma}[Conway]
\label{lemma:cancellableparts}
If $T$ is cancellable, then so is any part of $T$.
\end{lemma}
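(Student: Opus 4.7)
Let $T_1$ be a part of $T$, so there exists $T_2$ with $T = T_1 + T_2$. I need to verify both clauses of cancellability for $T_1$, and my plan is to reduce each clause to the corresponding clause for $T$ by augmenting witnesses with the counterpart $T_2$.

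For clause (i), suppose $G + T_1 = H + T_1$. The first thing I would do is observe that equality in $\mathcal{M}$ is a congruence with respect to addition (this is immediate from the definition $A = B \iff o^-(A+X) = o^-(B+X)$ for all $X$: specialize $X$ to $C+X$ to deduce $A+C = B+C$). Therefore adding $T_2$ to both sides gives
\[
G + T = G + T_1 + T_2 = H + T_1 + T_2 = H + T,
\]
and clause (i) of the cancellability of $T$ yields $G = H$.

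For clause (ii), suppose $G \linked H$. Applying clause (ii) of the cancellability of $T$ gives $G + T \linked H + T$. By definition of linking, there is some witness $U$ with
\[
o(G + T + U) = o(H + T + U) = \P.
\]
Regrouping $T = T_1 + T_2$ and setting $V = T_2 + U$, the same equation reads
\[
o\bigl((G + T_1) + V\bigr) = o\bigl((H + T_1) + V\bigr) = \P,
\]
so $V$ witnesses $G + T_1 \linked H + T_1$, as desired.

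There is no real obstacle here; the only conceptual point is to notice that although the \emph{existence} of a linking witness does not a priori transfer from $G+T$ to $G+T_1$, we can always absorb the extra summand $T_2$ \emph{into} the witness. The argument is thus a pair of one-line reductions, which explains why Conway relegated it to a lemma.
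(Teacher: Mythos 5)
Your proof is correct and is essentially the paper's own argument, just written out in more detail: the paper's one-line proof likewise handles clause (i) by adding the counterpart to both sides and invoking cancellability of $T$, and clause (ii) by passing from $G \linked H$ to $G+T \linked H+T$ and absorbing the counterpart into the linking witness. Nothing further is needed.
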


\begin{proof}
If $T = X + Y$, then $G + X = H + X$ implies $G + T = H + T$, and $G + T \linked H + T$ implies $G + X \linked H + X$.
\end{proof}

\begin{lemma}[Conway]
\label{lemma:cancellation}
For all $T$,
\begin{enumerate}
\item[(a)] $T$ is cancellable; and
\item[(b)] $T$ has only finitely many parts.
\end{enumerate}
\end{lemma}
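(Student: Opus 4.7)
The plan is to prove (a) and (b) simultaneously by strong induction on the formal birthday $\tilde{\bd}(T)$. The base case $T = 0$ is immediate: $G + 0 \cong G$ makes cancellability trivial, and Lemma~\ref{lemma:partsofzero} gives that $0$ has only the parts $0$ and $1$. For the inductive step, I assume the lemma for every $S$ with $\tilde{\bd}(S) < \tilde{\bd}(T)$, and without loss of generality take $T$ in simplest form, so that $\tilde{\bd}(T) = \bd(T)$. Within this step I would first prove (a)(ii), then use it to prove (a)(i), and finally derive (b).

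For (a)(ii) I would nest a sub-induction on $\tilde{\bd}(G) + \tilde{\bd}(H)$. Assuming $G \linked H$, the goal is $G + T \linked H + T$; by Lemma~\ref{lemma:linkediffunlike} this reduces to showing $G + T$ is unlike each option of $H + T$ and vice versa. The cases involving an option $T' \in T$ follow directly from the outer IH: since $T'$ is cancellable, (a)(ii) applied to $T'$ yields $G + T' \linked H + T'$, so any putative equality like $G + T = H + T'$ is forbidden by Theorem~\ref{theorem:likeiffnotlinked}, the option $G + T'$ of $G+T$ being linked to $H+T'$. The cases involving $G' \in G$ or $H' \in H$ fall to the sub-IH: $G \linked H$ implies $G \ne H'$ via Lemma~\ref{lemma:linkediffunlike}; Theorem~\ref{theorem:likeiffnotlinked} then unpacks this into either a linked-option witness, which the sub-IH lifts to a linked-option witness between $G+T$ and $H'+T$, or a proviso violation that I would resolve by direct outcome inspection.

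For (a)(i), suppose $G + T = H + T$ but $G \ne H$ for contradiction. Theorem~\ref{theorem:likeiffnotlinked} applied to $G \ne H$ produces either a linked-option violation, say $G' \linked H$, or a proviso failure. In the former case, (a)(ii) for $T$, just established, lifts to $G' + T \linked H + T$; but $G' + T$ is an option of $G + T$, and Theorem~\ref{theorem:likeiffnotlinked} applied to $G + T = H + T$ forbids any option of one side from being linked to the other, a contradiction. Proviso failures, where one of $G, H$ is $\cong 0$, reduce to questions about whether a nontrivial game can be a part of $T$, which can be handled by outcome analysis together with the outer IH on proper sub-games.

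Finally, for (b), the just-proved (a) combined with Lemma~\ref{lemma:cancellableparts} ensures every part of $T$ is cancellable. Finiteness comes from a birthday bound: I would prove that $\bd(H) \le \bd(T) + 1$ for every part $H$ of $T$, by showing that if $T = H + X$ with $H, X$ in simplest form, any option $H' \in H$ of excessively large birthday yields an option $H' + X$ of $H + X$ whose reduction into the canonical form of $T$ either preserves it (so $\bd(H' + X) \le \bd(T) - 1$) or renders it reversible (again capping $\bd(H')$). Since only finitely many game values have bounded birthday, (b) follows. The main obstacle is the entanglement of (a)(i) and (a)(ii): the induction on $T$ must thread (a)(ii) for $T$ using only (a) for options of $T$, and then feed (a)(ii) back into (a)(i) for $T$, all while handling the persistent nuisance of proviso cases, which lie outside the linked-option framework and must be resolved separately.
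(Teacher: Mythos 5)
There are two genuine gaps here, and they compound each other. First, your finiteness argument for (b) rests on the claimed bound $\bd(H) \le \bd(T)+1$ for every part $H$ of $T$, and that bound is false: the paper's own Figure~\ref{figure:day4composites} exhibits the partition $2_\sh = 2 + 2_\sh2_{\sh1}2_2$, where $T = 2_\sh$ has birthday $3$ while the part $2_\sh2_{\sh1}2_2$ has $2_2 = 32$ (a game of birthday $4$) among its canonical options and is itself born on day $5$. Parts of $T$ can be born strictly later than $T$, so no birthday bound of this kind is available. The paper's finiteness argument is entirely different: supposing infinitely many distinct partitions $T = X_i+Y_i$, it fixes an option $T'$ and uses $T' \not\linked X_i+Y_i$ to show that for all but finitely many $i$ one has $T' = X_i'+Y_i$, so that infinitely many of the $Y_i$ are parts of $T'$; since $T'$ has only finitely many parts by induction, two of the $Y_i$ coincide, and cancelling them (legitimate because parts of the cancellable game $T'$ are cancellable, by Lemma~\ref{lemma:cancellableparts}) contradicts the distinctness of the $X_i$.

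Second, you repeatedly defer the proviso cases to ``direct outcome inspection'' and ``outcome analysis,'' but the proviso case \emph{is} the crux of the whole theorem: one must show that $T = H+T$ forces $H=0$, and outcomes are far too coarse to see this. The paper's argument splits on how $T' \not\linked H+T$ is witnessed; in the essential case $T' = H+T^\dag$ it iterates $T = H+T$ to conclude that $m\cdot H$ is a part of $T$ for every $m$, invokes \emph{finiteness of the parts of $T$} to obtain $m\cdot H = n\cdot H$ with $m<n$, cancels (using that $H$ is a part of the cancellable $T'$) to get $(n-m)\cdot H = 0$, and finishes with Lemma~\ref{lemma:partsofzero} and Proposition~\ref{prop:starneq}. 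Note that this uses (b) \emph{for $T$ itself}, which is exactly why the paper proves (b) before (a); your proposed order (a)-then-(b) cannot work without a new idea. Your plan to establish (a)(ii) strictly before (a)(i) runs into the same wall: to get $G+T \neq H'+T$ from $G \neq H'$ you unpack the inequality via Theorem~\ref{theorem:likeiffnotlinked}, and the proviso branch of that unpacking is again an instance of cancellation. The clean resolution, as in the paper, is a single simultaneous induction on the pair $(G,H)$ carrying both clauses of cancellability at once, with the proviso disposed of by the finiteness-plus-torsion argument above.
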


\begin{proof}
The proof is by induction on $T$. For $T \cong 0$, (a) is trivial and (b) follows from Lemma~\ref{lemma:partsofzero}, so assume $T \not\cong 0$. Since the assertions are independent of the form of~$T$, we can furthermore assume that $T$ is given in simplest form. We will prove (b) first, then~(a).

\vspace{0.15cm}\noindent
(b) Assume (for contradiction) that $T$ has infinitely many distinct parts $X_1,X_2,\ldots$, and write
\[T = X_1 + Y_1 = X_2 + Y_2 = \cdots.\]
Since $T \not\cong 0$, there necessarily exists an option $T' \in T$.
For each $i$ we have $T' \not\linked X_i + Y_i$, so either $T'' = X_i + Y_i$, or $T' = X_i' + Y_i$, or else $T' = X_i + Y_i'$.  The first possibility cannot occur, since it contradicts the assumption that $T$ is in simplest form.  Furthermore, $T'$ has only finitely many parts, so $T' = X_i + Y_i'$ for at most finitely many~$i$.  Thus for infinitely many values of~$i$, we have $T' = X_i' + Y_i$.  It follows that there are $m < n$ with $Y_m = Y_n$.  But $T'$ is cancellable, so by Lemma~\ref{lemma:cancellableparts} so is $Y_m$.  Since $X_m + Y_m = X_n + Y_n$, this implies $X_m = X_n$, contradicting the assumption that all $X_i$ are distinct.

\vspace{0.15cm}\noindent
(a) The proof is by induction on $G$ and~$H$.  First suppose $G \linked H$.  To show that $G + T \linked H + T$, it suffices to show that $(G+T)' \neq H+T$ and $G+T \neq (H+T)'$.  Now since $G \linked H$, we have $G' \neq H$ and $G \neq H'$, so by induction $G' + T \neq H + T$ and $G + T \neq H' + T$.  Furthermore, by induction on $T$ we have $G + T' \linked H + T'$, and this implies $G + T' \neq H + T$ and $G + T \neq H + T'$.

Next suppose $G + T = H + T$.  Then $G' + T \not\linked H + T$ and $G + T \not\linked H' + T$, so by induction $G' \not\linked H$ and $G \not\linked H'$.  To complete the proof that $G = H$, we must verify the proviso.  By symmetry, it suffices to assume $G = 0$ and show that $H = 0$ as well.

Since $G = 0$, we have $T = H + T$.  If $T = 0$, then the conclusion is immediate; otherwise, there exists an option $T' \in T$, and we have $T' \not\linked H + T$.  There are three cases.

\vspace{0.15cm}\noindent
\emph{Case 1}: $T'' = H + T$.  Then $T'' = T$, so $T'' = H + T''$, and $H = 0$ by induction.

\vspace{0.15cm}\noindent
\emph{Case 2}: $T' = H' + T$.  Then $T$ is a part of $T'$.  By induction, $T'$ is cancellable, so by Lemma~\ref{lemma:cancellableparts} so is $T$.

\vspace{0.15cm}\noindent
\emph{Case 3}: $T' = H + T^\dag$, where $T^\dag$ is an option of $T$ (possibly distinct from $T'$).  By repeated application of the identity $T = H + T$, we have
\[T = H + T = H + H + T = H + H + H + T = \cdots.\]
Since $T$ has finitely many parts, we must have $m \cdot H = n \cdot H$ for some $m < n$.  But $T'$ is cancellable and $H$ is a part of $T'$, so by Lemma~\ref{lemma:cancellableparts}, $H$~is cancellable.  Therefore $(n-m) \cdot H = 0$.  By Lemma~\ref{lemma:partsofzero}, we have $H = 0$ or $1$.  But Proposition~\ref{prop:starneq} gives $H \neq 1$.
\end{proof}

A simple corollary of the Cancellation Theorem will prove to be useful.

\begin{corollary}
\label{corollary:derivation}
Suppose $G + X = Y$, with $G$ in simplest form.  For every option $G' \in G$, either $G' + X' = Y$ or $G' + X = Y'$.
\end{corollary}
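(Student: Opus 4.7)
The plan is to translate the hypothesis $G+X = Y$ into option-level information via Theorem~\ref{theorem:likeiffnotlinked}, and then to use the Cancellation Theorem to eliminate one stray case. Specifically, from clause~(ii) of Theorem~\ref{theorem:likeiffnotlinked} applied to $G+X = Y$, every option of $G+X$ is not linked to~$Y$. Fixing an arbitrary $G' \in G$, the sum $G' + X$ is one such option, so $G'+X \not\linked Y$.

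Next I would unpack this non-linking via Lemma~\ref{lemma:linkediffunlike}: either $G'+X$ equals some $Y' \in Y$, or some option of $G'+X$ equals~$Y$. The options of $G'+X$ split as $G''+X$ (for $G'' \in G'$) or $G'+X'$ (for $X' \in X$), so one of the following must hold:
\begin{enumerate}
\item[(a)] $G''+X = Y$ for some $G'' \in G'$;
\item[(b)] $G'+X' = Y$ for some $X' \in X$;
\item[(c)] $G'+X = Y'$ for some $Y' \in Y$.
\end{enumerate}
Alternatives (b) and (c) are precisely the two cases in the statement of the corollary, so the whole argument reduces to ruling out~(a).

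To eliminate~(a), I would combine $G''+X = Y$ with the hypothesis $G+X = Y$ to obtain $G + X = G'' + X$, then apply the Cancellation Theorem (Theorem~\ref{theorem:cancellation}) to conclude $G = G''$. Since $G''$ is an option of $G'$, this means $G'$ is an option of $G$ that is reversible through $G'' = G$, contradicting the hypothesis that $G$ is in simplest form.

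The key step is the application of cancellation: this is what promotes the purely formal equality $G+X = G''+X$ into the substantive identity $G = G''$ that then clashes with the definition of simplest form. Everything else is bookkeeping on the options of a sum, and no proviso subtleties arise because the argument only uses the non-linking clauses of Theorem~\ref{theorem:likeiffnotlinked}, not clauses~(iii) or~(iv).
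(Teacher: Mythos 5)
Your proof is correct and follows exactly the paper's own argument: apply Theorem~\ref{theorem:likeiffnotlinked} to get $G'+X \not\linked Y$, unpack via Lemma~\ref{lemma:linkediffunlike} into the three cases, and kill the case $G''+X=Y$ by cancellation against $G+X=Y$, contradicting canonicity of~$G$. Nothing to add.
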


\begin{proof}
By Theorem~\ref{theorem:likeiffnotlinked}, $G' + X \not\linked Y$.  By Lemma~\ref{lemma:linkediffunlike}, either $G'' + X = Y$, or $G' + X' = Y$, or else $G' + X = Y'$.  But in the first case we have $G'' + X = G + X$, so by cancellation $G'' = G$, contradicting the assumption that $G$ is canonical.
\end{proof}

\section{Parts and Differences}

\label{section:parts}

We write $X = G - H$ to mean $G = H + X$.  By Cancellation, there is at most one such~$X$ (up to equality), so this notation is reasonable.

When we proved the Cancellation Theorem, we showed that every game has just finitely many parts.  In particular, this implies that for a fixed $G$, there are only finitely many $H$ such that $G - H$ exists.

\begin{lemma}[Difference Lemma]
\label{lemma:difference}
If $G$ and $H$ are in simplest form and $G - H$ exists, then either:
\begin{enumerate}
\item[(a)] $G - H = G' - H'$ for some $G'$ and $H'$; or else
\item[(b)] Every $G' - H$ and $G - H'$ exists, and $G - H = \{G'-H,G-H'\}$.
\end{enumerate}
\end{lemma}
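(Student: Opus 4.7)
The plan is to fix $X := G - H$ in simplest form (permissible since differences are defined up to equality by Cancellation) and read off the structure of the options of $X$ from the equation $G = H + X$. If case (a) already holds we are done, so the goal reduces to showing that otherwise every $G' - H$ and $G - H'$ exists and that these exhaust the options of $X$, giving case (b). Three applications of the preceding machinery will do the job: one to understand the options of $G$, one for the options of $H$, and one for the options of $X$.

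For the options of $G$: Theorem~\ref{theorem:likeiffnotlinked} applied to $G = H + X$ yields $G' \not\linked H + X$ for each $G' \in G$, and Lemma~\ref{lemma:linkediffunlike} then forces one of three alternatives, namely $G'' = H + X = G$, $G' = H' + X$, or $G' = H + X'$. The first is impossible because $G$ is in simplest form (it would make $G'$ reversible through $G''$); the second yields $X = G' - H'$, i.e.\ case~(a); and the third gives $G' - H = X'$ as an option of $X$. For the options of $H$: Corollary~\ref{corollary:derivation} applied to $H + X = G$ with $H$ canonical gives, for each $H' \in H$, either $H' + X' = G$ (so $G - H'$ exists as an option of $X$) or $H' + X = G'$ (so $X = G' - H'$, again case~(a)). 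For the options of $X$: Corollary~\ref{corollary:derivation} applied to $X + H = G$ with $X$ canonical forces each $X' \in X$ to equal either $G - H'$ or $G' - H$.

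Assuming case~(a) fails, these three observations combine immediately: every $G' \in G$ contributes an option $G' - H$ to $X$, every $H' \in H$ contributes an option $G - H'$, and every option of $X$ is of one of these two forms. Hence $X = \{G' - H,\, G - H'\}$ in the desired sense, placing us in case~(b). The main obstacle is simply bookkeeping---remembering which of $G$, $H$, $X$ must sit on the left in each invocation of Corollary~\ref{corollary:derivation}, and using Theorem~\ref{theorem:likeiffnotlinked} directly for the $G$-option analysis (since $G$ appears on the right of $H + X = G$, so the Corollary does not apply there).
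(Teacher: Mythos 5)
Your proof is correct and takes essentially the same route as the paper's: both derive the trichotomy for the options of $G$, of $H$, and of $X$ from Corollary~\ref{corollary:derivation} (equivalently, from Theorem~\ref{theorem:likeiffnotlinked} together with Lemma~\ref{lemma:linkediffunlike}), discard the reversibility alternative using canonicity, and then read off case (a) or case (b). The only cosmetic difference is your direct use of the linkage lemmas for the $G$-option analysis, where the paper simply cites the Corollary (which does apply there, by viewing $G = H + X$ as $G + 0 = H + X$ with $G$ the canonical summand).
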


\begin{proof}
Suppose $G - H$ exists, say $G = H + X$, but it is not equal to any $G' - H'$.  Now by Corollary~\ref{corollary:derivation}, for every $G'$ we have either $G' = H' + X$ or $G' = H + X'$.  But $G' = H' + X$ would imply $G' - H' = X = G - H$, which we assumed is not the case.  Therefore $G' = H + X'$, so that $G' - H$ exists and is an option of~$X$.  An identical argument shows that every $G - H'$ exists and is also an option of~$X$.

Finally, for every $X'$ we have either $G' = H + X'$ or $G = H' + X'$, so either $X' = G' - H$ or $X' = G - H'$.
\end{proof}

\begin{definition}
Let $X$ be a part of $G$.  We say that $X$ is \defn{novel} if every $G - X'$ and every $G' - X$ exists.  Otherwise we say that $X$ is \defn{derived}.  We say that a partition $G = X + Y$ is \defn{novel} if either $X$ or $Y$ is novel; \defn{derived} if both parts are derived.
\end{definition}

\begin{lemma}
\label{lemma:derivedpartition}
If $G = X + Y$ is derived, then there exist $G'$ and $X'$ such that $G' = X' + Y$.
\end{lemma}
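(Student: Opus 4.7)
The plan is to apply the Difference Lemma (Lemma~\ref{lemma:difference}) directly to $G$ and $X$, exploiting the fact that the definition of \emph{derived} is precisely the negation of the conclusion of case~(b) of that lemma. First I would replace $G$, $X$, and $Y$ by their simplest forms; this is harmless because the notions of part, novel, and derived depend only on values, and the Difference Lemma requires its inputs to be in simplest form.

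Next, since $G = X + Y$, the difference $G - X$ exists and equals $Y$, so the Difference Lemma applies and yields one of two cases. In case (a), $G - X = G' - X'$ for some option $G'$ of $G$ and $X'$ of $X$; rewriting, $Y = G' - X'$, i.e., $G' = X' + Y$, which is exactly the desired conclusion. In case (b), every $G' - X$ and every $G - X'$ exists, which is \emph{verbatim} the definition of $X$ being novel. But the hypothesis that the partition $G = X + Y$ is derived means in particular that $X$ is derived (i.e., not novel), so case (b) is impossible. Hence case (a) holds and we are done.

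The main ``obstacle,'' if there is one, is just recognizing that the Difference Lemma was designed for exactly this dichotomy: its case~(b) is the novelty condition on $X$, and its case~(a) is the conclusion we seek. Once this is seen, the proof is immediate. It is worth noting that the argument never invokes the derivedness of $Y$; the weaker hypothesis ``$X$ is a derived part'' already suffices. The stronger symmetric hypothesis in the lemma statement presumably exists to supply, by a parallel application to $G - Y$, an analogous option-pair $G^\dag$, $Y'$ with $G^\dag = X + Y'$ in later uses.
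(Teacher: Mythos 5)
Your proof is correct and follows essentially the same route as the paper's: the paper applies Corollary~\ref{corollary:derivation} directly to conclude that failure of the conclusion forces every $G'-X$ and $G-X'$ to exist (so $X$ is novel), whereas you invoke the Difference Lemma, which is just that same dichotomy prepackaged. Your side remarks (reduction to simplest forms, and that only the derivedness of $X$ is used) are accurate and consistent with the paper's argument.
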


\begin{proof}
By Corollary~\ref{corollary:derivation}, for every $G'$ we have either $G' = X' + Y$ or $G' = X + Y'$.  Likewise, for every $X'$ we have either $G' = X' + Y$ or $G = X' + Y'$.  Thus if the conclusion fails, then $X$ is novel, whence $X + Y$ is novel.
\end{proof}

The preceding analysis gives a constructive way to compute all partitions of a given~$G$ (and, therefore, a constructive calculation, or proof of nonexistence, for $G - H$ as well). In particular, for any partition $G = X + Y$, at least one of $X$ or $Y$ must be a part of some $G' \in G$; thus we can recursively compute the parts of each $G'$ to build a set of candidates for the parts of~$G$.

If a partition $X + Y$ is novel, say with $X$ novel, then necessarily $X$ is a part of every $G'$ and $Y = \{G - X',G' - X\}$. If $X + Y$ is derived, then $X =$ some $G' - Y'$ and $Y =$ some $G' - X'$, so we can find both $X$ and $Y$ among the parts of various~$G'$.

Through a more careful analysis, we can make this test fairly efficient and avoid computing unnecessary sums. A complete algorithm, due jointly to Dan Hoey and the author, is given below in Appendix~\ref{appendix:algorithm}.

\subsection*{Parity}

\begin{definition}
We say that $U$ is a \defn{unit} if it has an inverse.  If $G = H + U$ for some unit $U$, then we say that $G$ and $H$ are \defn{associates} and write $G \approx H$.
\end{definition}

By Lemma~\ref{lemma:partsofzero}, the only units are $0$ and $1$.  Thus by Proposition~\ref{prop:starneq}, every game $G$ has exactly two associates, $G$ and $G + 1$, and this induces a natural pairing among games.  We now introduce a convenient way to distinguish between the elements of each pair.

\begin{definition}
If (the simplest form of) $G$ is an option of (the simplest form of) $G + 1$, then we say that $G$ is \emph{even} and $G + 1$ is \emph{odd}.
\end{definition}

\begin{proposition}[Conway]
Every game $G$ is either even or odd, but not both.
\end{proposition}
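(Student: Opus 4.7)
The plan is to show that if $G$ is already in canonical form and $H$ denotes the canonical form of $G + 1$, then exactly one of $G \in H$ or $H \in G$ holds; the general case follows by replacing $G$ and $G + 1$ by their canonical forms.

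For uniqueness, the argument will be immediate: suppose both $G \in H$ and $H \in G$. Then $H$ is literally an option of $G$, and $H$ in turn has $G$ as an option. Since the value of the parent game here is $G$ itself, this makes $H$ a reversible option of $G$ (reversing through its own option $G$), contradicting the assumption that $G$ is canonical.

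For existence I would look directly at the formal sum $G + 1$, whose options are $\{G' + 1 : G' \in G\} \cup \{G\}$. Canonicalizing each option yields a game $H_0$ of the same value $G + 1$ that still contains $G$ literally as an option (since $G$ and all its options are already canonical). The canonical form $H$ then arises from $H_0$ by removing reversible options. The crucial observation will be: the option $G$ of $H_0$ is reversible iff $G$ has an option of value $G + 1$; since $G$'s options and $H$ are both canonical, this is equivalent to $H \in G$. Thus if $H \not\in G$, then $G$ is not reversible in $H_0$, and since non-reversible options survive canonicalization, $G \in H$.

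The main obstacle will be the proviso, which can constrain the canonicalization of $H_0$ if the simplification would produce $0$---and this happens precisely when $H = 0$, i.e., $G = 1$. The plan is to dispose of the base cases $G \cong 0$ and $G \cong 1$ directly: if $G \cong 0$, then $H = 1 = \{0\} = \{G\}$, so $G \in H$ and $H \not\in G$; if $G \cong 1 = \{0\}$, then the formal sum $G+1$ simplifies to $0$ (the proviso is satisfied since $o(1+1) = \N$), giving $H = 0 \in \{0\} = G$ while $G \not\in H$. Combined with the main argument above, these dispose of every $G$.
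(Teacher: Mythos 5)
Your proposal is correct and follows essentially the same route as the paper: both arguments hinge on the fact that the canonical game $G$ appears literally as an option of the formal sum $G+1$, so it either survives canonicalization (making $G$ even) or is reversible, forcing $G' = G+1$ for some option $G'$ (making $G$ odd), with exclusivity coming from the absence of reversible options in a canonical form. The only cosmetic difference is that the paper rules out ``both'' via canonicity of $G+1$'s simplest form, whereas you use canonicity of $G$; the two are interchangeable here.
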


\begin{proof}
Assume $G$ is in simplest form.  If $G$ is not even, then $G$ must be a reversible option of $G + 1$, so that $G + 1 = G'$.  Therefore $G$ is odd.

Moreover, if $G$ is even, then it is a canonical option of $G + 1$, and hence not reversible.  Therefore $G + 1 \neq G'$ for every $G'$.  So $G$ cannot be both even and odd.
\end{proof}

\begin{proposition}[Conway]~

\begin{enumerate}
\item[(a)] If $G$ and $H$ are both even or both odd, then $G + H$ is even.
\item[(b)] If $G$ is even and $H$ is odd, then $G + H$ is odd.
\end{enumerate}
\end{proposition}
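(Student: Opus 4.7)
The plan is to pick canonical representatives: assume $G$ and $H$ are in simplest form, and let $K$ denote the simplest form of $G + H$. By definition, $K$ is odd iff $K$ has an option of value $K + 1 = G + H + 1$, so the whole task reduces to deciding when such an option exists.

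The first preliminary step is a translation between options of $K$ and options of the raw sum $G + H$: I claim the option-values of $K$ are exactly those option-values $w$ of $G + H$ for which $w$ has no sub-option of value $K$. This follows from Theorem~\ref{theorem:likeiffnotlinked} and Lemma~\ref{lemma:linkediffunlike}, since each option of $G + H$ fails to be linked to $K$ and hence either coincides with an option of $K$ or is reversible to $K$, while conversely no option of $K$ may have a sub-option of value $K$ without contradicting the fact that $K$ is in simplest form.

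The second step is to identify which raw options $G' + H$ or $G + H'$ of $G + H$ have value $K + 1$. By the Cancellation Theorem this happens exactly when $G' = G + 1$ or $H' = H + 1$, so the raw options of value $K + 1$ are precisely $G^\ast + H$ and $G + H^\ast$, where $G^\ast \in G$ and $H^\ast \in H$ are the (necessarily unique) options of values $G + 1$ and $H + 1$---and these exist precisely when $G$ or $H$ is odd. From here the argument branches: in the both-even case, no such raw option exists, so $K$ is even; in the even/odd case, the unique raw option of value $K + 1$ is non-reversible, since its candidate sub-options of value $K$ would force either $G' = G + 1$ (contradicting the even hypothesis) or $H^{\ast\ast} = H$ (contradicting the simplest form of $H$) by Cancellation, and so this raw option survives into $K$, making $K$ odd.

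I expect the main obstacle to be the both-odd case, where two raw options of value $K+1$ are present and one must argue that both get removed in passing to $K$. The key calculation is that $G^\ast + H$ has $G^\ast + H^\ast$ as a sub-option, and by Cancellation $G^\ast + H^\ast = K$ (since $1 + 1 = 0$); the remaining sub-options $G^{\ast\ast} + H$ of $G^\ast + H$ cannot have value $K$, for that would force $G^{\ast\ast} = G$ by Cancellation and hence contradict the simplest form of $G$. Thus $G^\ast + H$ is reversible to $K$, and symmetrically for $G + H^\ast$, so neither survives into $K$, leaving $K$ with no option of value $K+1$ and hence even.
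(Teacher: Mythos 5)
Your both-even and even/odd cases are sound; the both-even case is essentially the paper's own argument (Cancellation applied to the raw options $G'+H$ and $G+H'$). The gap is in the both-odd case, and it traces back to the word ``exactly'' in your first step. From $K = G+H$, Theorem~\ref{theorem:likeiffnotlinked} and Lemma~\ref{lemma:linkediffunlike} give you two one-way implications: every option of canonical $K$ equals \emph{some} raw option of $G+H$, and every raw option with no sub-option of value $K$ equals some option of $K$. But the converse you assert---that a raw option $w$ \emph{with} a sub-option of value $K$ cannot equal an option of $K$---does not follow: the lemma yields an inclusive disjunction (``$w = K'$ for some $K'$, or $w' = K$ for some $w'$''), and both disjuncts can hold at once. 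Your justification (``no option of $K$ may have a sub-option of value $K$'') applies to the canonical form of the option $K'$, whereas ``has a sub-option of value $K$'' is a property of the particular form $G^{\ast}+H$; having a sub-option of a given value is not invariant under equality of forms. So in the both-odd case, showing that the form $G^{\ast}+H$ reverses through $G^{\ast}+H^{\ast}=K$ does not exclude that canonical $K$ nevertheless has an option of value $K+1$---indeed, whether the canonical form of $K+1$ has an option of value $K$ is precisely the question of whether $K$ is even, which is what is being proved, so the ``does not survive'' step is circular as written.

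The fix is cheap, and it is what the paper does: only the both-even case needs a direct argument, and the remaining cases follow by replacing each odd summand $X$ by its even associate $X+1$ and using $1+1=0$. In your framework this amounts to rerunning your both-even analysis on the alternative form $G^{\ast}+H^{\ast}$ of $K$ (where $G^{\ast}$, $H^{\ast}$ are the canonical even associates, which are options of $G$ and $H$): by Cancellation no raw option of that form has value $K+1$, and every option of canonical $K$ is a raw-option-value of that form as well, so $K$ is even. Your even/odd case could likewise be dispatched in one line by the same substitution, though your direct argument there is correct.
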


\begin{proof}
Suppose $G$ and $H$ are both even, and assume (for contradiction) that $G + H$ is reversible in $G + H + 1$.  Without loss of generality, $G' + H = G + H + 1$.  By Cancellation, $G' = G + 1$, contradicting the assumption that $G$ is even.

The remaining cases follow immediately, substituting $X + 1$ for $X$ whenever $X$ is odd.
\end{proof}

\subsection*{The Group of Differences of $\mathcal{M}$}

Let $\mathcal{D}$ be the Abelian group obtained by adjoining formal inverses for all games in~$\mathcal{M}$.  That is,
\[
\mathcal{D} = \{G - H : G,H \in \mathcal{M}\},
\]
with $G_1 - H_1 = G_2 - H_2$ iff $G_1 + H_2 = G_2 + H_1$. $\mathcal{D}$ is an Abelian group, and since $\mathcal{M}$ is cancellative it embeds in~$\mathcal{D}$.

We have seen that $1 + 1 = 0$; we will now show that $1$ is the \emph{only} torsion element of $\mathcal{D}$, so that $\mathcal{D}$ is torsion-free modulo association.

\begin{theorem}
\label{theorem:torsion}
If $n \cdot G = n \cdot H$ and $G$ and $H$ are both even, then $G = H$.
\end{theorem}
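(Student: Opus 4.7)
The plan is to induct on $\bd(G)+\bd(H)$, with $G$ and $H$ taken in simplest form. The base case is $G\cong 0$ (symmetric for $H$): then $nH=0$, so Lemma~\ref{lemma:partsofzero} gives $H\in\{0,1\}$, and the evenness hypothesis rules out $H=1$, leaving $H=0=G$.

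For the inductive step, I would invoke the equality test of Theorem~\ref{theorem:likeiffnotlinked}. Since neither $G$ nor $H$ is $0$, the proviso clauses (iii)--(iv) are vacuous, so by symmetry it suffices to show that no option $G'\in G$ is linked to $H$. Fix $G'\in G$ and apply Corollary~\ref{corollary:derivation} to the partition $G+(n-1)G=nH$ (with $G$ in simplest form). After rearranging via $nG=nH$ and Cancellation, the Corollary's dichotomy reduces to:
\begin{itemize}
\item[\textup{(B)}] $G'+H=G+H^\dag$ for some $H^\dag\in H$; or
\item[\textup{(A)}] $G'+G^\dag = 2G$ for some $G^\dag\in G$.
\end{itemize}

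Case (B) will be dispatched by the inductive hypothesis. The equation rewrites as $G'-H^\dag = G-H$ in $\mathcal{D}$, hence $nG'=nH^\dag$ in $\mathcal{M}$. Because $G$ and $H$ are both even, $G-H$ lies in the even coset of $\mathcal{D}/\{0,1\}$, so $G'$ and $H^\dag$ share parity; replacing by associates if necessary, we may take both to be even. The inductive hypothesis (applicable since $\bd(G')+\bd(H^\dag)<\bd(G)+\bd(H)$) then gives $G'=H^\dag$, exhibiting $G'$ as the value of an option of $H$ and contradicting $G'\bowtie H$ via Lemma~\ref{lemma:linkediffunlike}.

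The main obstacle is case (A), where $G'+G^\dag=2G$ contains no reference to $H$. My plan is to iterate the Corollary on $G^\dag$ (and cascade further). If at some stage an option falls into the (B)-alternative, the preceding argument identifies that option with an option of $H$; chasing back through $G'+G^\dag=2G$ via a further application of Corollary~\ref{corollary:derivation} to a partition of $2H$ then exhibits $G'$ itself as the value of an option of $H$, again contradicting $G'\bowtie H$. Otherwise every option in the chain produces another partition of $2G$ by options of $G$, and since $2G$ has only finitely many parts (Lemma~\ref{lemma:cancellation}(b)), the chain must eventually repeat, with Cancellation collapsing it to $G'\approx G$. But $G'\approx G$ is impossible for a canonical option: $G'=G$ violates well-foundedness of the game tree, while $G'=G+1$ would make $G$ a reversible option of $G+1$, contradicting the fact that $G$ is even (and so appears as a \emph{canonical} option of $G+1$). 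The delicate step---which I expect to be the non-obvious ingredient Conway alluded to---is verifying that the finite-parts iteration in case (A) genuinely forces $G'\approx G$, and that the parity/simplest-form bookkeeping goes through cleanly.
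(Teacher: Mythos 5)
Your base case, your reduction via Corollary~\ref{corollary:derivation} to the dichotomy (A)/(B), and your treatment of case (B) are all sound (modulo a small bookkeeping worry: after replacing $G'$ and $H^\dag$ by their even associates, $\bd(G'+1)$ may equal $\bd(G)$, so the measure $\bd(G)+\bd(H)$ need not strictly decrease; this is repairable by proving the parity-symmetric statement ``$n\cdot G = n\cdot H$ and $G,H$ of equal parity imply $G \approx H$'' and inducting structurally on the pair of canonical game trees). The genuine gap is case (A). Iterating the Corollary produces a chain $G^{(0)}=G',\ G^{(1)}=G^\dag,\ G^{(2)},\ldots$ with $G^{(i)}+G^{(i+1)}=2G$ at every step that stays in alternative (A); but then $G^{(i)}+G^{(i+1)} = G^{(i+1)}+G^{(i+2)}$ and Cancellation gives $G^{(i)}=G^{(i+2)}$ immediately. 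The chain is therefore periodic with period dividing $2$ from the outset, the finiteness of the parts of $2G$ contributes nothing further, and you never reach $G^{(i)}=G^{(i+1)}$ (which is what you would need to conclude $2G^{(i)}=2G$ and hence $G'\approx G$ --- and even that last inference is an instance of the theorem being proved, so it would be circular). Likewise, if some $G^{(i)}$ with $i\geq 1$ falls into alternative (B), you learn $G^{(i)}=H^\dag$ for some $H^\dag\in H$, whence $G'+H^\dag = 2G$ when $i=1$; but this does not exhibit $G'$ as equal to an option of $H$, nor an option of $G'$ as equal to $H$, so it does not refute $G'\linked H$. In short, the options of $G$ falling into case (A) are exactly the ones your argument cannot connect to $H$, and your plan for them does not close.

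The structural source of the difficulty is the decision to verify $G=H$ through the linkage test of Theorem~\ref{theorem:likeiffnotlinked}, which obliges you to handle \emph{every} option of $G$. The paper's proof sidesteps this entirely: write $X$ for the \emph{simplest form} of $n\cdot G = n\cdot H$ and pick any single option $X'\in X$. Because $X$ is canonical, $X'$ is not reversible, so Lemma~\ref{lemma:linkediffunlike} forces $X' = (n-1)\cdot G + G' = (n-1)\cdot H + H'$ simultaneously --- i.e., you land in your case (B) on both sides at once, with no case (A) to worry about. Multiplying by $n$ and cancelling gives $n\cdot G' = n\cdot H'$ with $G'$ and $H'$ of equal parity, the structural induction gives $G'=H'$, and cancelling $G'=H'$ from the displayed identity yields $(n-1)\cdot G=(n-1)\cdot H$, closing the argument by a secondary induction on $n$. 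If you reroute your case (B) machinery through this single well-chosen option of the canonical form of $n\cdot G$, rather than through an arbitrary option of $G$, your proof goes through.
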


\begin{proof}
Suppose $n \cdot G = n \cdot H$, written as $X$ in simplest form.  If $X = 0$, then by Lemma~\ref{lemma:partsofzero} we have $G = H = 0$.  Otherwise, there is some option $X' \in X$, and we have
\[
X' = (n-1) \cdot G + G' = (n-1) \cdot H + H' \tag{\dag}
\]
for some $G' \in G$ and $H' \in H$. Multiplying by $n$ gives
\[
n \cdot (n-1) \cdot G + n \cdot G' = n \cdot (n-1) \cdot H + n \cdot H'.
\]
Now $n \cdot G = n \cdot H$, so $n \cdot (n-1) \cdot G = n \cdot (n-1) \cdot H$, so by Cancellation
\[
n \cdot G' = n \cdot H'.
\]
Now $G'$ and $H'$ have the same parity, by (\dag).  So by induction on $G$ and $H$, we may assume that $G' = H'$.  But now Cancellation on (\dag) gives
\[
(n-1) \cdot G = (n-1) \cdot H,
\]
and the conclusion follows by induction on $n$.
\end{proof}

\begin{corollary}
$1$ is the only torsion element of $\mathcal{D}$.
\end{corollary}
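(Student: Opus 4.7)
The plan is to reduce to a 2-torsion statement via doubling, then conclude by a short parity analysis. Suppose $G - H \in \mathcal{D}$ is torsion, so $n \cdot (G - H) = 0$ for some $n \geq 1$; in $\mathcal{M}$ this reads $n \cdot G = n \cdot H$. The goal is to show $G = H$ or $G = H + 1$, i.e.\ $G - H \in \{0, 1\}$. Doubling the hypothesis gives $n \cdot (2G) = 2n \cdot G = 2n \cdot H = n \cdot (2H)$, and $2G = G + G$ is a sum of two games of the same parity, hence even by the parity proposition; likewise $2H$. Applying Theorem~\ref{theorem:torsion} to this identity yields $2G = 2H$.

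It then suffices to deduce $G = H$ or $G = H + 1$ from the single relation $2G = 2H$, which I would do by case analysis on the parities of $G$ and $H$, using freely the identity $1 + 1 = 0$. If $G$ and $H$ are both even, Theorem~\ref{theorem:torsion} applies directly and gives $G = H$. If both are odd, then $G + 1$ and $H + 1$ are even, and $2(G+1) = 2G + (1+1) = 2G = 2H = 2(H+1)$, so the theorem yields $G + 1 = H + 1$, whence $G = H$ by Cancellation. If the parities differ, say $G$ even and $H$ odd, then $G$ and $H + 1$ are both even with $2G = 2H = 2(H+1)$, so the theorem gives $G = H + 1$, i.e.\ $G - H = 1$.

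No step here is especially hard---Theorem~\ref{theorem:torsion}, the Cancellation Theorem, and the additivity of parity carry all the load. The only point that requires care is ensuring that both sides are even before invoking Theorem~\ref{theorem:torsion}; the identity $1 + 1 = 0$ makes the necessary adjustments essentially automatic in each case, so there is no real obstruction beyond tidy bookkeeping.
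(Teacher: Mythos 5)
Your proof is correct and rests on the same engine as the paper's: everything reduces to Theorem~\ref{theorem:torsion}. The paper's own proof is a one-liner---``$n \cdot G = n \cdot H$, so Theorem~\ref{theorem:torsion} implies $G \approx H$''---which quietly elides the fact that the theorem carries the hypothesis that \emph{both} games are even, whereas the $G$ and $H$ in the corollary are arbitrary. Your doubling step is a clean, explicit way to discharge that hypothesis: $2G$ and $2H$ are automatically even by the parity proposition, so the theorem legitimately yields $2G = 2H$, and the remaining case analysis on the parities of $G$ and $H$ (using $1 + 1 = 0$ and Cancellation) finishes the job. So this is not a different route so much as a more careful execution of the same one; if anything, your version supplies bookkeeping that a skeptical reader would want to see spelled out in the paper's proof.
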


\begin{proof}
Let $G - H \in \mathcal{D}$ and suppose $n \cdot (G-H) = 0$ for some~$n$, so that $n \cdot G = n \cdot H$. Theorem~\ref{theorem:torsion} implies $G \approx H$, so that $G - H = 0$ or~$1$.
\end{proof}

In particular $\mathcal{D}/\approx$ is a torsion-free Abelian group, but the following major question remains open.

\begin{question}
What is the isomoprhism type of $\mathcal{D}$?
\end{question}

\section{Primes}

\label{section:primes}

\begin{definition}
A part $H$ of $G$ is said to be \defn{proper} if $H \not\approx 0$ or~$G$.
\end{definition}

\begin{definition}
$G$ is said to be \defn{prime} if $G$ is not a unit and $G$ has no proper parts.
\end{definition}

\begin{theorem}
Every game $G$ can be partitioned into primes.
\end{theorem}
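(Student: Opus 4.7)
The plan is to proceed by strong induction on the number of parts of $G$, which is finite by Lemma~\ref{lemma:cancellation}(b). This is the natural induction parameter: it is a non-negative integer, well-defined thanks to Cancellation, and---as I will argue below---it strictly decreases when we split off a proper part. The base cases are $G$ prime (where the singleton partition $G = G$ itself exhibits a prime partition) and $G$ a unit (where $G \in \{0,1\}$ and the partition is empty or trivial, depending on one's reading of the theorem).

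For the inductive step, assume $G$ is neither prime nor a unit. Then $G$ has a proper part $H$, so $G = H + K$ for some counterpart $K$. The first thing to check is that $K$ is itself proper. If $K \approx 0$, then $G \approx H$, contradicting properness of $H$; and if $K \approx G$, then $H \approx 0$, again contradicting properness of $H$. So both $H$ and $K$ are non-unit proper parts of $G$.

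Next I would show that $H$ and $K$ each have strictly fewer parts than $G$. Every part of $H$ is automatically a part of $G$: if $H = L + M$, then $G = L + (M + K)$, so the set of parts of $H$ injects into the set of parts of $G$. On the other hand, $G$ is trivially a part of itself (via $G = G + 0$), but $G$ is \emph{not} a part of $H$: if $H = G + L$, then $G = H + K = G + L + K$, which by Theorem~\ref{theorem:cancellation} forces $L + K = 0$, whence Lemma~\ref{lemma:partsofzero} gives $K \in \{0,1\}$, contradicting that $K$ is not a unit. The analogous statement holds for $K$, so both counterparts have strictly fewer parts than $G$.

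With strict decrease of the induction parameter established, the inductive hypothesis supplies prime partitions of $H$ and of $K$, and concatenating them yields a prime partition of $G$. The heart of the argument is the strict decrease, which rests on showing that the counterpart $K$ is also a non-unit---this is the single non-trivial step, and it ultimately reduces to the characterization of units in Lemma~\ref{lemma:partsofzero} together with Cancellation. Once that is in hand the induction essentially writes itself.
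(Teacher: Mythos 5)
Your proof is correct and follows essentially the same route as the paper's: induction on the (finite, by Lemma~\ref{lemma:cancellation}) number of parts, splitting off a proper part and showing the count strictly decreases. You supply a few details the paper leaves implicit---that the counterpart $K$ is also proper, and the Cancellation/Lemma~\ref{lemma:partsofzero} argument that $G$ is not a part of $H$---but the underlying argument is the same.
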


\begin{proof}
By Lemma~\ref{lemma:cancellation}, every game has just finitely many parts.  We can therefore prove the theorem by induction on the number of proper parts of $G$.

If $G$ itself is prime, then there is nothing to prove.  Otherwise, we can write $G = X + Y$, where $X$ and $Y$ are proper parts of $G$.  Now every proper part of~$X$ is a proper part of $G$, but $X$ is not a proper part of $X$.  Therefore $X$ has strictly fewer proper parts than $G$.  By induction, $X$ has a prime partition.  By the same argument, so does $Y$, and we are done.
\end{proof}

It is important to note that a partition of $G$ into primes need not be unique.  For example, one can show that
\[(4+2)_\sh = 2 + P = 4 + Q,\]
where $P$ and $Q$ are distinct primes. (This example is originally due to Conway and Norton.) The behavior of primes can often be quite subtle.  It is possible for $G$ to have several prime partitions of different lengths:
\[(4+2_\sh)_\sh = 2 + P_1 + P_2 = 4 + Q,\]
where $P_1$, $P_2$, and $Q$ are all distinct primes.  Furthermore, $G + G$ might have a prime part that is not a part of $G$.  For example, if $G = (4+2_\sh)_{\sh\sh}$, then there exists a partition of $G+G$ into exactly three primes.

While these examples advise caution, we can nonetheless discern some useful structure among primes.  In the following propositions we assume $G$ to be given in simplest form.

\begin{proposition}
If $G$ has $0$ or $1$ as an option, then $G$ is prime.
\end{proposition}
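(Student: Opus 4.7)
The plan is to argue the contrapositive: suppose $G$ (in simplest form) admits a partition $G = X + Y$ in which neither $X$ nor $Y$ is a unit, and derive a contradiction from the hypothesis that $G$ has $0$ or $1$ as an option. Passing to simplest forms of $X$ and $Y$, the aim is to force one of them to be a part of~$0$, whereupon Lemma~\ref{lemma:partsofzero} makes it a unit.

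The driver of the argument is Theorem~\ref{theorem:likeiffnotlinked}(ii) combined with Lemma~\ref{lemma:linkediffunlike}: since $G = X + Y$, every option $G' \in G$ fails to be linked to $X + Y$, so either $G'$ equals an option of $X + Y$ in value---that is, $G' = X' + Y$ or $G' = X + Y'$ for some $X' \in X$ or $Y' \in Y$---or else some option of $G'$ equals $X + Y = G$ in value. The options of $X + Y$ take exactly this explicit form, which is what makes the dichotomy usable.

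Apply the dichotomy with $G' = 0$ in the first case: since $0$ has no options, only the first clause can hold, so $0 = X' + Y$ or $0 = X + Y'$. Either exhibits $Y$ (respectively $X$) as a part of~$0$, and by Lemma~\ref{lemma:partsofzero} this part is $0$ or~$1$, a unit---contradicting our assumption. Apply the dichotomy with $G' = 1$ in the second case: the ``option of $G'$ equals $G$'' branch would give $0 = G$, impossible because $G$ is in simplest form and has a nonempty option set, while the simplest form of the zero game is $\{\}$. Hence $1 = X' + Y$ or $1 = X + Y'$. Adding $1$ to both sides converts this into the partition $0 = (X' + 1) + Y$ (or its symmetric counterpart), and Lemma~\ref{lemma:partsofzero} again forces the remaining summand to be a unit.

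The only non-mechanical point is ruling out the ``side branch'' of Lemma~\ref{lemma:linkediffunlike}---that some option of $G'$ could equal $G$---which is vacuous for $G' = 0$ and a one-line invocation of simplest-form uniqueness for $G' = 1$. Everything else is bookkeeping built on Lemma~\ref{lemma:partsofzero}.
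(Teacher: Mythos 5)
Your argument is correct and follows essentially the same route as the paper's: the paper invokes Corollary~\ref{corollary:derivation} to obtain $G' = X' + Y$ (or $G' = X + Y'$) for the option $G' \approx 0$, then applies Lemma~\ref{lemma:partsofzero} to conclude that one summand is a unit, exactly as you do. The only cosmetic difference is that you unwind Corollary~\ref{corollary:derivation} into its ingredients (Theorem~\ref{theorem:likeiffnotlinked} and Lemma~\ref{lemma:linkediffunlike}) and dispose of the reverting branch by inspecting the options of $0$ and $1$ directly, rather than relying on the canonical-form and Cancellation argument already packaged in that corollary.
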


\begin{proof}
Let $G' \in G$ with $G' = 0$ or $1$, and suppose that $G = X + Y$.  By Corollary~\ref{corollary:derivation}, we have $G' = X' + Y$, without loss of generality.  But $G' \approx 0$, so by Lemma~\ref{lemma:partsofzero}, $Y \approx 0$, and hence $X \approx G$.  Thus $0$ and $G$ are the only even parts of $G$.
\end{proof}

\begin{proposition}
\label{prop:primeoptbiprime}
If $G$ has a prime option, then $G$ has at most two even prime parts.
\end{proposition}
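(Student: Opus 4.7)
The plan is to use the prime option $P$ of $G$ to sharply constrain any even prime part $X$ of $G$. Write $G = X + Y$ with $X$, $Y$, and $G$ all in simplest form. As in the proof of Lemma~\ref{lemma:difference}, Corollary~\ref{corollary:derivation} applied to the sum $X + Y = G$ shows that every option of $G$ -- in particular, the prime option $P$ -- satisfies either $P = X' + Y$ for some $X' \in X$ or $P = X + Y'$ for some $Y' \in Y$.

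Suppose first $P = X + Y'$. Then $X$ is a part of the prime $P$, so $X \approx 0$ or $X \approx P$; the former is impossible because $X$ itself is prime (hence not a unit), so $X \approx P$. Suppose instead $P = X' + Y$. Then $Y$ is a part of $P$, so $Y \approx 0$ or $Y \approx P$. If $Y \approx 0$, then $G = X + Y \approx X$, so $X \approx G$; if $Y \approx P$, then $X \approx G - P$. Thus every even prime part $X$ of $G$ lies in one of three associate classes, namely $\{P,P+1\}$, $\{G-P,(G-P)+1\}$, or $\{G,G+1\}$, and by Proposition~\ref{prop:starneq} each class contains a unique even game, giving at most three candidate even prime parts.

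To reduce the bound from three to two, I would observe that the third class is relevant only when $G$ itself is prime, since $X \approx G$ together with $X$ prime forces $G$ to be prime (primeness is associate-invariant). If $G$ is prime, then every part of $G$ is a unit or an associate of $G$, so the only even prime part of $G$ is the even element of $\{G,G+1\}$, a single game. If $G$ is not prime, then $X \approx G$ cannot occur, and only the classes $\{P,P+1\}$ and $\{G-P,(G-P)+1\}$ can contribute, yielding at most two even prime parts. Either way the conclusion holds.

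The main subtlety, and really the only delicate step, is recognizing that the $X \approx G$ case is always absorbed: it can arise only under the stringent hypothesis that $G$ itself is prime, in which case the other two candidate classes either coincide with $\{G,G+1\}$ or fail to produce a part at all. Once this absorption is in hand, the rest is routine bookkeeping on associate classes and the uniqueness of parity within each class.
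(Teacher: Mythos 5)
Your proof is correct, but it is organized differently from the paper's. You fix a single even prime part $X$ with counterpart $Y$, push the prime option $P$ into one summand of $X+Y$, and use the primality of $P$ to force either $X \approx P$ (when $P = X + Y'$) or $Y \approx 0$ or $Y \approx P$ (when $P = X' + Y$), so that $X$ lands in the associate class of $P$, of $G$, or of $G-P$; the class of $G$ is then absorbed because $X \approx G$ with $X$ prime forces $G$ prime, in which case there is only one even prime part. The paper instead constrains the shape of \emph{prime partitions}: it first shows that no partition $G = X+Y+Z$ can have three non-unit parts, and then that any two two-term prime partitions are termwise associate. Your route is somewhat more direct for the stated conclusion, because the counterpart $Y$ of a prime part need not itself be prime, and your argument handles this automatically ($Y$ is a part of the prime $P$, hence a unit or $\approx P$ by Lemma~\ref{lemma:partsofzero} and the definition of prime, whether or not $Y$ is prime); the paper leaves the reduction from ``prime part'' to ``two-term prime partition'' implicit. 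What the paper's version buys is the auxiliary fact, reused in the subsequent corollary and in the proposition on games all of whose options are prime, that every partition of $G$ involves at most two non-units. One small remark: like the paper itself in Lemmas~\ref{lemma:difference} and~\ref{lemma:derivedpartition}, you invoke Corollary~\ref{corollary:derivation} in its symmetric form (describing the options of the sum $G$ rather than of a summand); this variant is equally valid, following from Theorem~\ref{theorem:likeiffnotlinked} and Lemma~\ref{lemma:linkediffunlike} together with the fact that $G$, being canonical, has no reversible options.
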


\begin{proof}
Fix a prime option $G'$. First suppose $G = X + Y + Z$ for any three games $X$, $Y$, and~$Z$. By Corollary~\ref{corollary:derivation}, we have $G' = X' + Y + Z$, without loss of generality. Since $G'$ is prime, one of $Y$ or $Z$ must be a unit. This shows that every partition of $G$ involves at most two primes.

Now suppose we write $G = P_1 + P_2 = Q_1 + Q_2$.  Without loss of generality,
\[
G' = P_1' + P_2 = Q_1' + Q_2.
\]
Since $G'$ is prime, $P_1'$ and $Q_1'$ must be units, so $P_2 \approx Q_2$ are equal up to a unit. By Cancellation, $P_1 \approx P_2$ as well.
\end{proof}

\begin{corollary}
If $G$ has at least three prime options, distinct modulo associatation, then $G$ is prime.
\end{corollary}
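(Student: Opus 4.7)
The plan is to argue by contradiction. Suppose $G$ has three pairwise non-associate prime options $G_1',G_2',G_3'$, yet $G$ itself is not prime. Following the convention of this section I would take $G$ to be in simplest form. Since $G$ has options, $G$ is not a unit, so non-primality forces a partition $G = X + Y$ in which $X$ is a proper part; a quick check via Cancellation together with the definition of ``proper'' shows that the counterpart $Y$ must then also be proper. In particular, neither $X$ nor $Y$ is a unit (nor is either associate to $G$).

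Next I would invoke Corollary~\ref{corollary:derivation} in the same manner used to prove Proposition~\ref{prop:primeoptbiprime}: since $G$ is in simplest form and equals $X + Y$, every canonical option $G_i'$ must be of the form $X' + Y$ or $X + Y'$ for some option of the relevant summand. In either case one of $X$ or $Y$ is a part of the prime game $G_i'$, so that part must be a unit or associate to $G_i'$. Properness of $X$ and $Y$ rules out the ``unit'' possibility, leaving only $G_i' \approx Y$ in the first case and $G_i' \approx X$ in the second. Thus each of the three prime options $G_1', G_2', G_3'$ lies in one of just two association classes, so by pigeonhole at least two are associate to each other, contradicting pairwise distinctness modulo association.

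The only step that demands care is the application of Corollary~\ref{corollary:derivation} to a simplest-form $G$ written as a sum $X+Y$: one must justify that every canonical option of $G$ really does arise as $X'+Y$ or $X+Y'$. But the identical application is already invoked (for three-summand partitions) in the proof of Proposition~\ref{prop:primeoptbiprime}, so it requires no new work here. Beyond that, the argument is essentially a one-line pigeonhole refinement of the preceding proposition, so I anticipate no further obstacle.
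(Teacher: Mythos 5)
Your proof is correct and is essentially the paper's argument: both use Corollary~\ref{corollary:derivation} to show that each prime option of a composite $G$ must absorb one whole summand of a fixed two-part partition of $G$ and hence (being prime, with that summand not a unit) be associate to that summand, so the prime options fall into at most two association classes---the paper takes the partition to be the unique prime partition supplied by Proposition~\ref{prop:primeoptbiprime}, while you take an arbitrary partition into proper parts, which is a harmless (indeed slightly leaner) variation. The one slip is the justification ``$G$ has options, so $G$ is not a unit''---the unit $1 \cong \{0\}$ does have an option---but the needed fact is immediate anyway, since a unit cannot have three pairwise non-associate prime options (its only possible option, $0$, is not prime).
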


\begin{proof}
Suppose $G$ has a prime option but is not itself prime.  By Proposition~\ref{prop:primeoptbiprime}, $G$ has a unique prime partition $G = P + Q$.  Therefore every $G' = P' + Q$ or $P + Q'$.  In particular, if $G'$ is prime, then $G' \approx P$ or~$Q$.  So $G$ has at most two prime options, up to association.
\end{proof}

In a related vein, we have the following proposition.

\begin{proposition}[Conway]
If every option of $G$ is prime, then so is $G$, \emph{unless} $G = 0$, $2_\sh$, $3_\sh$, or $32$.
\end{proposition}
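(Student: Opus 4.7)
I would argue by contradiction. Assume $G$ is in simplest form, every option of $G$ is prime, and $G$ is not prime; I want to conclude $G \in \{0, 2_\sh, 3_\sh, 32\}$. The case $G \cong 0$ is handled directly from the definition of prime, so I may assume $G$ has at least one option. Since $G$ is not prime, pick a proper partition $G = X + Y$ with both $X$ and $Y$ non-units, in canonical form. The first step is Corollary~\ref{corollary:derivation}: every canonical option $G'$ of $G$ equals $X' + Y$ or $X + Y'$, and since $G'$ is prime while $X, Y$ are non-units, the other summand must be a unit. Hence every canonical option of $G$ is $\approx X$ or $\approx Y$, drawn from the four values $\{X, X+1, Y, Y+1\}$.

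Next I would track which formal options of $X + Y$ actually survive into the canonical form of $G$. A formal option $X' + Y$ is lost only when it is reversible, and by Cancellation an identity $X'' + Y = X + Y$ forces $X'' = X$, contradicting the canonicality of $X$; so the only possible source of reversibility is a \emph{reverting pair} $(X', Y')$ satisfying $X' + Y' = G$, in which case $Y$ contains an option of value $G - X'$ and both $X' + Y$ and $X + Y'$ are killed by the same pair. The main case split is whether reverting pairs occur. If none do, every $X' \in X$ contributes a canonical option $X' + Y$ of $G$, whose primality forces $X'$ to be a unit; by the Mex Rule the only canonical games with all options units are $0, 1, 2$, so $X = 2$ (as $X$ is non-unit) and symmetrically $Y = 2$, yielding $G = 2 + 2 = 32$.

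In the reverting-pair case the identity $Y' = G - X'$ pins down the permissible non-unit options of $Y$ (and dually for $X$), while the surviving formal options still must be $\approx X$ or $\approx Y$. Chasing these constraints shows that one of $X, Y$ (say $X$) is forced to be a short nimber, specifically $X = 2$ or $X = 3$; tracing through the remaining conditions in each sub-case then pins $G$ down to $2_\sh$ or $3_\sh$ respectively. The hardest step is this second case: non-unit options of $X$ force partners $G - X'$ in $Y$, and these in turn force new partners back in $X$, so the argument must iterate until the short-nimber structure of the smaller summand emerges. The crucial leverage is Cancellation together with the rigidity of canonical forms: each non-unit value $Y'$ is determined up to association by $X'$ via $Y' = G - X'$, which sharply limits how many non-unit options of $Y$ can coexist and keeps the case analysis finite.
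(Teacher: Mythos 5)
Your opening moves are sound and your ``no reverting pair'' case is essentially complete: the observation that a formal option $X'+Y$ can only be reversible via a pair $X'+Y'=G$ (since $X''+Y=X+Y$ would force $X''=X$ by Cancellation) is correct, and in that case concluding $X=Y=2$ and $G=32$ works. But the entire content of the proposition lives in your second case, and there you have only asserted the conclusion, not proved it. ``Chasing these constraints shows that one of $X,Y$ is forced to be a short nimber'' and ``tracing through the remaining conditions pins $G$ down'' are exactly the steps that need an argument; nothing you have written bounds the iteration you describe or explains why it converges to $2_\sh$ or $3_\sh$ rather than to some larger canonical game. Two specific ingredients are missing. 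First, you never use the fact that $G$ has a \emph{prime} option to control the partition itself: Proposition~\ref{prop:primeoptbiprime} gives that $G=P+Q$ with $P,Q$ prime and that this partition is \emph{unique} up to association. Uniqueness is what rules out $P'\approx Q$ for an option $P'$ of $P$ (it would force $Q'\approx P$ and hence $P''\approx P$, contradicting canonicity), and hence is what confines every option of $P$ to $\{0,1,G,G+1\}$. Working with an arbitrary non-unit partition $X+Y$, as you do, forfeits this leverage. Second, you need the well-foundedness argument that $G$ cannot be a follower of \emph{both} $P$ and $Q$ (every option of $G$ is $\approx P$ or $\approx Q$, so that would make $P$ or $Q$ associated to one of its own proper followers); this is what forces one summand to have only units as options, hence to equal $2$, and then forces every option of $G$ to be associated to $2$, so that $G\in\{2_\sh,3_\sh,32\}$.

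A smaller but telling error: your claim that $X=2$ yields $G=2_\sh$ while $X=3$ yields $G=3_\sh$ cannot be the right dichotomy. Both $2_\sh$ and $3_\sh$ decompose as $2$ plus an even prime (see Figure~\ref{figure:day4composites}), and $3\approx 2$ is merely the odd associate of $2$, so the identity of the nimber summand does not distinguish the two exceptional games; what distinguishes them is whether the option set of $G$ is $\{2\}$ or $\{3\}$ (with $\{2,3\}$ giving $32$). This suggests the constraint-chasing you have in mind is not tracking the right invariant. I would recommend restructuring the second case along the paper's lines: establish the unique prime partition $G=P+Q$, show every option of each of $P$ and $Q$ lies in $\{0,1,G,G+1\}$, and then use the follower argument to finish.
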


\begin{proof}
Suppose every option of $G$ is prime, but $G$ is not.  By Lemma~\ref{prop:primeoptbiprime}, if $G \neq 0$ then we can write $G = P + Q$ for suitable primes $P$ and $Q$, and furthermore $P$ and $Q$ are unique (up to association). Assume each of $G$, $P$, and $Q$ is given in simplest form.

Now $G$ cannot be odd, since then $G + 1$ would be a prime option of~$G$.  So $G$ is even, and we may therefore assume that $P$ and $Q$ are both even.  Now for every option $P'$, we have either $G' = P' + Q$ or $G = P' + Q'$.  If $G' = P' + Q$, then since $G'$ is prime we must have $G' \approx Q$, so $P'$ is a unit.  Suppose instead that $G = P' + Q'$.  We cannot have $P' \approx P$, since $P$ is even.  Furthermore, we cannot have $P' \approx Q$: since the partition of $G$ into $P+Q$ is unique, this would imply $Q' \approx P$, so that $P'' \approx P$, contradicting the assumption that $P$ is in simplest form.  Therefore either $P'$ is a unit and $Q' \approx G$, or else $Q'$ is a unit and $P' \approx G$.

We have therefore shown that every option of $P$ is either $0$, $1$, $G$, or $G + 1$.  By symmetry, the same is true for $Q$.  But for every option $G'$, we have $G' = P' + Q$ or $G' = P + Q'$.  Since $G'$ is prime, this implies $G' \approx P$ or $Q$.  Therefore $G$ cannot be an option of \emph{both} $P$ and $Q$: this would imply that $P$ (or $Q$) is associated to one of its followers.  Therefore one of $P$ or $Q$ has only $0$ and $1$ as options.  Without loss of generality, assume it is~$P$.  Since $P$ is prime, we must have $P = 2$.

If also $Q = 2$, then $G = 2 + 2 = 32$ and we are done.  Otherwise, $Q$ has $G$ as a follower.  But this means no follower of $G$ can be associated to $Q$.  Thus every option of $G$ is associated to $P = 2$, and this completes the proof.
\end{proof}

The above propositions suggest that composite games are relatively rare.  This can be made precise by considering the number of composite games born by day~$n$.  It suffices to consider only even composites, since the number of odd composites born by day~$n$ is precisely equal to the number of even composites born by day~$n-1$.

There are six even composites born by day~$4$.  These and their unique partitions are summarized in Figure~\ref{figure:day4composites}.  A computer search revealed exactly $490$ even composites born by day~$5$.  Of these, $481$ have exactly two even prime parts.  Figure~\ref{figure:day5composites} lists the nine examples with more than two parts.

\begin{figure}
\[\begin{array}{c@{~=~}l}
2_2 & 2 + 2 \\
2_\sh & 2 + 2_\sh2_{\sh1}2_2 \\
3_\sh & 2 + 3_\sh3_{\sh1}3_2
\end{array}
\qquad
\begin{array}{c@{~=~}l}
2_\sh2 & 2 + 2_2 2_{\sh2} (2_\sh2)(2_\sh2)_1 \\
2_\sh32 & 2 + 2_2 3_2 2_{\sh 2} (2_\sh32) (2_\sh32)_1 \\
2_{\sh\sh} & 2 + 2_{\sh\Neg2} + (2_{\sh\sh}2_{\sh\sh1}2_{\sh\Neg2})0
\end{array}\]
\vspace{-0.25cm}
\caption{The six composite even games born by day 4.}
\label{figure:day4composites}
\end{figure}

\begin{figure}
\[\begin{array}{c|c}
G & \textrm{Primes} \\
\hline
2_{\sh\sh}  & 3 \\
3_{\sh\sh}  & 3 \\
2_{\sh1\sh} & 3
\end{array}
\qquad
\begin{array}{c|c}
G & \textrm{Primes} \\
\hline
2_{2\sh}      & 3 \\
2_{\sh 2}     & 3 \\
2_{\sh\sh\sh} & 4
\end{array}
\qquad
\begin{array}{c|c}
G & \textrm{Primes} \\
\hline
2_{\sh 1}2_\sh & 3 \\
2_{\sh\sh}2_\sh & 3 \\
2_{\sh\sh}2_{\sh 1}2_\sh & 3
\end{array}\]
\vspace{-0.25cm}
\caption{The nine highly composite even games born by day 5, listed with number of prime parts.}
\label{figure:day5composites}
\end{figure}

\subsection*{Unique Partitions}

\begin{definition}
We say that $G$ has the \emph{unique partition property (UPP)} if~$G$ has exactly one prime partition (up to association).
\end{definition}

We have already noted that $(4+2)_\sh$ does not have the UPP.  In this section we will prove that every game born by day~6 has the UPP.  Since $(4+2)_\sh$ is born on day~7, it is therefore a minimal example.

\begin{definition}
We say $G$ is a \emph{biprime} if $G$ has exactly two even prime parts.
\end{definition}

\begin{proposition}
\label{proposition:nubiprime}
Suppose that $G$ has a biprime option, say $G' = R + S$, with $R$ and $S$ prime.  Then either:
\begin{enumerate}
\item[(a)] $G$ is itself a prime or a biprime; or
\item[(b)] There is a prime $P$ such that $G = P + R + S$, and this is the unique prime partition of~$G$; or
\item[(c)] There are primes $P$ and $Q$ such that $G = P + R = Q + S$, and these are the only two prime partitions of~$G$.
\end{enumerate}
\end{proposition}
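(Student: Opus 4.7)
The plan is to classify all prime partitions of $G$ by exploiting the biprime option $G' = R + S$ as a rigid constraint, following the template of Proposition~\ref{prop:primeoptbiprime}. Throughout, $G$, $R$, and $S$ are in simplest form; note in particular that $R \not\approx S$, since $G'$ has exactly two prime parts modulo association. I would fix an arbitrary prime partition $G = X_1 + \cdots + X_n$ and apply Corollary~\ref{corollary:derivation} iteratively (as in the proof of Proposition~\ref{prop:primeoptbiprime}) to conclude that the specific option $G' = R + S$ can, after reindexing, be written as
\[
R + S = X_1' + X_2 + \cdots + X_n
\]
for some option $X_1'$ of $X_1$. Each $X_j$ with $j \geq 2$ is then a prime part of $G'$; since $G'$ is biprime, its prime parts modulo association are exactly $\{R, S\}$, so each such $X_j$ is associated to $R$ or to $S$.

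The next step is to bound $n$. Assuming $n \geq 2$ and (without loss of generality) $X_2 \approx R$, Cancellation gives $X_1' + X_3 + \cdots + X_n \approx S$. If $n \geq 3$, then $X_3$ is a prime part of the prime $S$; primality of $S$ forces every part of $S$ to be $\approx 0$ or $\approx S$, so $X_3 \approx S$. A second Cancellation then yields $X_1' + X_4 + \cdots + X_n \approx 0$, and Lemma~\ref{lemma:partsofzero} forces each summand to be a unit. Since $X_j$ for $j \geq 4$ is prime, this is impossible, so $n \leq 3$; moreover, $n = 3$ additionally forces $X_1' \approx 0$.

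The case analysis is then essentially automatic. If $n = 1$, $G$ is prime, giving case~(a). If $n = 3$, the analysis yields $G \approx P + R + S$ with $P = X_1$ prime; any other length-$3$ partition of $G$ shares the same two $R, S$ summands and hence has the same $P$ by Cancellation, and any hypothetical length-$2$ partition $G = Y + Z$ would, when matched against $G = P + R + S$ via Cancellation, make one of $Y, Z$ a sum of two non-unit primes, contradicting primality; this gives case~(b). If $n = 2$, the analysis gives either $G = P + R$ with $P$ prime having $S$ as an option, or (swapping roles) $G = Q + S$ with $Q$ prime having $R$ as an option; Cancellation shows each type admits at most one partition up to association, so either exactly one type is realized ($G$ biprime, case~(a)) or both are (case~(c)). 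The main obstacle is bookkeeping rather than any deep difficulty: cleanly justifying the identification $G' = X_1' + X_2 + \cdots + X_n$ for general $n$ via iterated Corollary~\ref{corollary:derivation}, and verifying that $2$-prime and $3$-prime partitions cannot coexist, so that (a), (b), (c) really are mutually exclusive.
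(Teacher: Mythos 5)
Your argument is correct and follows essentially the same route as the paper's proof: both push an arbitrary prime partition $G = X_1 + \cdots + X_n$ through the biprime option via Corollary~\ref{corollary:derivation}, use Cancellation together with primality of $R$ and $S$ to force $n \leq 3$ (with $X_1'$ a unit when $n = 3$), and then split into the cases $n = 3$ (unique partition, case~(b)) and $n = 2$ (at most two partitions, cases~(a) or~(c)). The only cosmetic differences are that you bound $n$ uniformly up front and add the inessential side remarks that $R \not\approx S$ and that $P$ has an option associated to $S$, whereas the paper cases directly on whether some partition has $k \geq 3$ parts.
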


\begin{proof}
If $G$ is prime, then we are in case (a), so assume that it is not.

\vspace{0.1in}\noindent
\emph{Case 1}:
First suppose $G$ has a prime partition
\[G = P_1 + \cdots + P_k\]
with $k \geq 3$.  Without loss of generality,
\[G' = P_1' + P_2 + \cdots + P_k.\]
Since $G'$ is a biprime, it must be the case that $k = 3$ and $P_1'$ is a unit, and without loss of generality $P_2 \approx R$ and $P_3 \approx S$.  We claim that this is the unique prime partition of $G$.  For suppose
\[G = Q_1 + \cdots + Q_l\]
is any prime partition.  By an identical argument we have $l \leq 3$ and
\[G' = Q_1' + Q_2 + \cdots + Q_l.\]
Certainly $l \neq 1$, since $G$ is composite. Moreover, since $Q_2$ is a prime part of $G'$, we have $Q_2 \approx R$ without loss of generality.  Therefore $l \neq 2$: by Cancellation, $l = 2$ would imply $Q_1 = P_1 + R$, contradicting the fact that $Q_1$ is prime.  So $l = 3$, and hence $Q_2 \approx R$, $Q_3 \approx S$, and by Cancellation $Q_1 \approx P_1$. This shows that $P_1 + P_2 + P_3$ is unique, establishing case~(b).

\vspace{0.1in}\noindent
\emph{Case 2}:
Next suppose that every prime partition of $G$ has exactly two primes. Consider any such partition
\[
G = P_1 + P_2.
\]
Without loss of generality,
\[
G' = P_1' + P_2.
\]
But by the assumptions on $G'$, this implies $P_2 = R$ or~$S$, and by Cancellation, $P_1$ is uniquely determined by~$P_2$. This shows that there are at most two such partitions, so either $G$ is a biprime, or else it has exactly two prime partitions into exactly two parts, as in~(c).
\end{proof}

\begin{corollary}
\label{corollary:nusimple}
Suppose $G$ is a game born by day $6$ without the UPP.  Assume that at least one option of $G$ is a biprime.  Then in fact,
\[G = 2 + P = Q_1 + Q_2,\]
for distinct primes $P$, $Q_1$, and $Q_2$, none of which equal~$2$.
\end{corollary}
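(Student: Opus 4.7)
The plan is to apply Proposition~\ref{proposition:nubiprime} to a biprime option of $G$ and then leverage Corollary~\ref{corollary:derivation} together with the day-$6$ birthday bound.

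Let $G' = R + S$ be the biprime option of $G$, with $R, S$ primes. Since $G$ lacks the UPP, $G$ is composite with at least two distinct prime partitions; this rules out cases (a) and (b) of Proposition~\ref{proposition:nubiprime} (both of which force a unique prime partition), so case (c) must hold: $G = P + R = Q + S$ for primes $P, Q$, and these are the only two prime partitions up to association.

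Next, since $G' \in G$ and $G = P + R$, standard linking analysis forces $G'$ to equal either $P' + R$ or $P + R'$ as values, for some option of $P$ or $R$. In the first case, Cancellation against $G' = R + S$ yields $P' = S$, so $S$ is an option of $P$ in simplest form. In the second, $P + R' = R + S$; comparing against the unique prime partition $\{R, S\}$ of the biprime $G'$ (ruling out composite $R'$ by that uniqueness and unit $R'$ because then $G'$ would be prime) forces $P \approx R$, the alternative $P \approx S$ being excluded since it entails $G = G'$, impossible in simplest form. Thus $G = 2R$. Applying the symmetric analysis to $G = Q + S$ either gives $R \in Q$, producing the main configuration $S \in P$ and $R \in Q$, or collapses the two partitions via Theorem~\ref{theorem:torsion}. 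The remaining possibility is an anomalous configuration of the form $G = 2R$ for some prime $R$ with a second distinct prime partition.

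From $S \in P$, primality of $S$ (together with the fact that $2$'s only options are the units $0$ and $1$) forces $P \neq 2$; symmetrically $Q \neq 2$. The main obstacle is then twofold: (i) to exclude the anomalous case $G = 2R$ with a second distinct prime partition, by a birthday argument using $b(G) \leq 6$; and (ii) in the main configuration, to establish that one of $R$ or $S$ equals $2$. I expect both reductions to hinge on the structure of day-$5$ biprimes, propagating the observation (visible in Figure~\ref{figure:day4composites}) that every composite even game born by day $4$ contains $2$ as a prime part. Once $R = 2$ (say) is established, write $G = 2 + P = Q + S$; the pairwise distinctness of $P, Q, S$ and their nonequality to $2$ then each follow from a short Cancellation check, since any collision would immediately force the two prime partitions of $G$ to coincide, contradicting case (c) of Proposition~\ref{proposition:nubiprime}.
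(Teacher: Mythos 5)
Your opening reduction is exactly the paper's first step: since $G$ lacks the UPP it is neither prime nor a biprime, and case (b) of Proposition~\ref{proposition:nubiprime} forces uniqueness, so case (c) applies and $G = P + R = Q + S$ where $G' = R + S$ is the biprime option. But after that you never actually prove the one claim that constitutes the rest of the corollary, namely that one of $R$ or $S$ is~$2$; you explicitly defer it (``I expect both reductions to hinge on the structure of day-$5$ biprimes'') and the only evidence you cite is the day-$4$ data of Figure~\ref{figure:day4composites}, which does not propagate formally to day~$5$. The paper's proof supplies exactly this missing ingredient as a separate, computer-verified input: $2$ is a part of \emph{every} composite game born by day~$5$. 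Since $G'$ is an option of a game born by day~$6$, it lies in $\mathcal{M}_5$; it is composite (being a biprime); hence $2$ is one of its two even prime parts, so $R = 2$ up to association, and substituting into case (c) gives the stated form. The distinctness assertions then follow by short Cancellation checks, as you indicate at the end. Without the day-$5$ fact the corollary simply is not provable by the kind of local analysis you attempt, so this is a genuine gap rather than an alternative route.

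The intervening ``linking analysis'' is also not needed and is not sound as written. From $G' = P + R' = R + S$ you exclude $P \approx S$ on the grounds that it ``entails $G = G'$, impossible in simplest form''; but $P \approx S$ only yields $G \approx G'$, and the possibility $G = G' + 1$ is not ruled out by canonicity --- indeed $G = G' + 1$ for some option $G'$ is precisely what happens when $G$ is odd, and nothing in the hypotheses forces $G$ to be even at that point in your argument. The derived claims ($S \in P$, $G = 2R$, the ``anomalous configuration'') therefore rest on an unjustified step, and in any case none of them appear in, or are needed for, the actual proof once the day-$5$ fact is invoked.
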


\begin{proof}
This is just Proposition~\ref{proposition:nubiprime}, together with the (computationally verifiable) fact that $2$ is a part of every composite game born by day~5.
\end{proof}

\begin{lemma}
\label{lemma:nudecomposition}
Suppose $G$ is a game born by day $6$ without the UPP.  Suppose $G$ has a biprime option $2+R$.  If some other option $G'$ does \emph{not} have $R$ as a part, then $G = R+S$, for some part $S$ of $G'$.
\end{lemma}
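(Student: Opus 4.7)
The plan is to exploit Corollary \ref{corollary:nusimple}, which gives us $G = 2 + P = Q_1 + Q_2$ as the only two prime partitions of $G$ (up to association), with $P, Q_1, Q_2$ distinct primes, none associated to $2$. Assume $G$ is in simplest form. The main auxiliary observation I would establish first is that every option $G^*$ of $G$ has the form $Q_1' + Q_2$ or $Q_1 + Q_2'$: by Theorem \ref{theorem:likeiffnotlinked}, $G^* \not\linked Q_1 + Q_2$, and by Lemma \ref{lemma:linkediffunlike} this either forces $G^{**} = Q_1 + Q_2 = G$ (impossible, since $G$ has no reversible options) or $G^* = (Q_1 + Q_2)'$, which takes precisely one of the two claimed forms.

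Apply this first to the biprime option $2 + R$. Relabeling $Q_1$ and $Q_2$ if necessary, write $2 + R = Q_1' + Q_2$. Then $Q_2$ is a prime part of the biprime $2 + R$, so (by the definition of biprime) $Q_2$ is associated with one of its two even prime parts, namely $2$ or $R$; since $Q_2 \not\approx 2$, this forces $Q_2 \approx R$.

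Now apply the same decomposition to the distinguished option $G'$. Either $G' = Q_1' + Q_2$, in which case $R \approx Q_2$ is a part of $G'$, contradicting the hypothesis; or else $G' = Q_1 + Q_2'$, so that $Q_1$ is a part of $G'$. So the second alternative must hold. Writing $Q_2 = R + U$ for the appropriate unit $U \in \{0, 1\}$, we get $G = Q_1 + Q_2 = (Q_1 + U) + R$, so $G = R + S$ with $S = Q_1 + U$; and since $S \approx Q_1$ and parts are closed under association, $S$ is indeed a part of $G'$.

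There is no single hard step here once the structural setup from Corollary \ref{corollary:nusimple} is invoked. The one subtlety to watch is using a consistent labeling of $Q_1$ and $Q_2$: after the WLOG relabeling made when analyzing $2 + R$, the argument for $G'$ must use that same labeling in order to pit the hypothesis ``$R$ is not a part of $G'$'' against the case $G' = Q_1' + Q_2$.
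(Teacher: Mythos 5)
Your proof is correct and follows essentially the same route as the paper's: invoke Corollary~\ref{corollary:nusimple}, decompose the option $2+R$ as an option of the partition $Q_1+Q_2$ to identify one of the $Q_i$ with $R$ (up to association), and then rule out the corresponding decomposition of $G'$ using the hypothesis that $R$ is not a part of $G'$. The only differences are cosmetic: you re-derive the option decomposition from Theorem~\ref{theorem:likeiffnotlinked} and Lemma~\ref{lemma:linkediffunlike} rather than citing Corollary~\ref{corollary:derivation}, and you track associates slightly more explicitly than the paper does.
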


\begin{proof}
By Corollary~\ref{corollary:nusimple}, we have
\[G = 2 + P = Q_1 + Q_2,\]
where $Q_1,Q_2 \neq 2$.  Without loss of generality, $2 + R = Q_1 + Q_2'$.  Since $Q_1 \neq 2$, we must have $Q_1 = R$.

Now we cannot have $G' = Q_1 + Q_2'$, since $Q_1 = R$ is not a part of $G'$.  So $G' = Q_1' + Q_2$, whence $Q_2$ is a part of $G'$.
\end{proof}

\begin{theorem}
Every game born by day 6 has the UPP.
\end{theorem}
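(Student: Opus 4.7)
The plan is to argue by contradiction: let $G$ be a game born by day~6 that lacks the UPP, taken in simplest form. I will derive a contradiction through a case analysis based on what options $G$ has available.

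First, $G$ must be composite, since every prime trivially has the unique (length-one) partition $G = G$. Next, I claim that no option of $G$ can be prime. The proof of Proposition~\ref{prop:primeoptbiprime}, together with the cancellation argument used there, shows that a composite game possessing a prime option has a uniquely-determined two-part prime partition, which would give $G$ the UPP. Hence every option of $G$ is composite, and since options of $G$ are born by day~5, the computational fact invoked in Corollary~\ref{corollary:nusimple}---that every composite game born by day~5 has $2$ as a part---forces every option of $G$ to take the form $2 + R'$.

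The analysis now splits. In the main case, $G$ has a biprime option $G' = 2 + R$ with $R$ prime. Then Proposition~\ref{proposition:nubiprime}, in the form of Corollary~\ref{corollary:nusimple}, gives $G = 2 + P = Q_1 + Q_2$ with $P$, $Q_1$, $Q_2$ distinct primes, none equal to $2$, and---crucially---these are the \emph{only} two prime partitions of $G$ up to association. Matching $G' = 2 + R$ against the options of $2 + P$ and of $Q_1 + Q_2$ produced by Corollary~\ref{corollary:derivation} ($G' = 1 + P$ or $2 + P'$ on the one hand, $G' = Q_i' + Q_{3-i}$ on the other), and using the unique partition of the biprime $G'$, pins down $R$ up to association in terms of the primes $P$, $Q_1$, $Q_2$. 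I then apply Lemma~\ref{lemma:nudecomposition} to a second, suitably chosen option of $G$---for instance, one of the form $Q_1' + Q_2$ or $Q_1 + Q_2'$ in which $R$ is not a part---to obtain a decomposition $G = R + S$. Combined with a prime partition of $S$, this produces a prime partition of $G$ starting with $R$; I will then check that the constraints above on $R$ force this to be a \emph{third} partition, inconsistent with Corollary~\ref{corollary:nusimple}. (Alternatively, $R$ being a part of every other option induces such strong structural rigidity that it cannot be reconciled with $G = 2 + P = Q_1 + Q_2$.)

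The remaining case is that no option of $G$ is a biprime, so every option of $G$ lies among the nine highly composite games listed in Figure~\ref{figure:day5composites}. Since there are only nine candidates, and each of them has a specific, known partition structure, I would dispose of this case by direct enumeration, checking which combinations of these nine games can simultaneously appear as options of a single composite $G$ born by day~6 (aided by the \emph{cgsuite} engine mentioned in the introduction), and verifying that no such $G$ lacks the UPP. The main obstacle is the biprime case: tracking $R$ with respect to the two partitions $2 + P$ and $Q_1 + Q_2$, and ensuring that Lemma~\ref{lemma:nudecomposition} always yields either an impossible third partition or a structurally inconsistent configuration, is the crux of the proof; the highly-composite case is conceptually easier but computationally tedious.
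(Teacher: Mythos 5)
Your overall skeleton matches the paper's up to a point: dispose of games with prime options via Proposition~\ref{prop:primeoptbiprime}, observe that every composite option has $2$ as a part, and split on whether $G$ has a biprime option. But the crux of your argument---the biprime case---does not work as described. You propose to apply Lemma~\ref{lemma:nudecomposition} to obtain $G = R+S$ and then exhibit this as a \emph{third} prime partition contradicting Corollary~\ref{corollary:nusimple}. It is not a third partition: in the proof of Lemma~\ref{lemma:nudecomposition} one gets $R = Q_1$ and $S = Q_2$, so $G = R+S$ is literally the partition $Q_1+Q_2$ you already had, and no contradiction arises. More fundamentally, no purely structural contradiction is available here, because the configuration you hope to refute---$G = 2+P = Q_1+Q_2$ with a biprime option---is actually realized by $(4+2)_\sh$ on day~$7$; any correct proof must therefore use the day-$6$ bound in an essential way. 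The paper uses Lemma~\ref{lemma:nudecomposition} for a different purpose: it shows that a UPP-violating $G$ with a biprime option either lies in the finite, explicitly computable set $\mathcal{A}+\mathcal{A}$ (where $\mathcal{A}$ is the set of even prime parts of composite day-$5$ games), or else all of its options share a common part $R \neq 2$ and hence are drawn from the small set $\{2+R\}\cup\mathcal{C}_R$. Both cases are then exhausted by machine---checking which elements of $\mathcal{A}+\mathcal{A}$ have birthday at most $6$, and enumerating subsets of $\{2+R\}\cup\mathcal{C}_R$ for each $R$. Your ``structural rigidity'' alternative would have to reproduce this finite enumeration in some form; it cannot be replaced by a clean deduction.

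Two smaller gaps. First, you never treat odd $G$: the paper reduces these to their even associates, which are born by day~$5$, and invokes the computationally verified fact that every game born by day~$5$ has the UPP---a fact your argument also relies on implicitly but never states. Second, in the non-biprime case the options of $G$ need not be among the nine \emph{even} highly composite games of Figure~\ref{figure:day5composites}; the paper enumerates subsets of the ten-element set $\mathcal{C}$ of all day-$5$ games with at least three prime parts, which includes an odd member.
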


\begin{proof}
Let $G$ be a game born by day 6.  If $G$ has any prime options, then $G$ is a biprime, so it has the UPP.  Likewise, if $G$ is odd, then $G + 1$ is an even game born by day~5, with the same parts as $G$.  We know that every game born by day~5 has the UPP, so such $G$ must also have the UPP.  Thus we need only consider even games whose options are all composite.

Now let
\[\mathcal{C} = \{G \in \mathcal{M}_5 : \textrm{$G$ has at least $3$ prime parts}\}.\]
We noted previously that $|\mathcal{C}| = 10$.  Thus there are $2^{10}$ subsets of $\mathcal{C}$, and a computer search can rapidly verify that all of them have the UPP.

This leaves only those games with at least one biprime option.  We can now apply the following trick.  Let
\[\mathcal{A} = \{H : H \textrm{ is an even prime part of some composite game born by day } 5\}.\]
Let $\mathcal{A} + \mathcal{A}$ be the set of all pairwise sums of elements of $\mathcal{A}$.  If $G$ has at least one biprime option, then by Lemma~\ref{lemma:nudecomposition}, \emph{either}:
\begin{enumerate}
\item[(i)] $G \in \mathcal{A} + \mathcal{A}$; \emph{or}
\item[(ii)] All options of $G$ share a common part $R \neq 2$.
\end{enumerate}

It therefore suffices to exhaust all possibilities for (i) and (ii).  For (i), we have $|\mathcal{A}| < 500$, so $|\mathcal{A} + \mathcal{A}| < 25000$.  It is therefore easy to compute the set $\mathcal{A} + \mathcal{A}$, and a simple computation shows that for most $G \in \mathcal{A} + \mathcal{A}$, we have $\birthday(G) > 6$.  We can then show directly that the remaining few have the UPP.

To complete the proof, we describe how to exhaust case~(ii).  For each $R \in \mathcal{A}$, let
\[\mathcal{C}_R = \{G \in \mathcal{C} : R \textrm{ is a part of } G\}.\]
Now $\Sigma_R |\mathcal{C}_R|$ is small, since the elements of $\mathcal{C}$ collectively have a small number of parts.  But to address case~(ii), we need only consider those games whose options are subsets of
\[\{2 + R\} \cup \mathcal{C}_R,\]
for some $R \neq 2$: these are exactly the games whose options share the common factor $R$.  We can therefore iterate over all $R$ and all subsets of $\{2 + R\} \cup \mathcal{C}_R$, checking that each possibility has the UPP.

All of the necessary computations to complete the proof have been implemented and verified in \emph{cgsuite}.
\end{proof}

\appendix

\section{Recurrence Relations for $|\mathcal{M}_n|$}

\label{appendix:daynrecurrences}

This Appendix gives a set of recurrence relations that can be used to compute the exact value of $|\mathcal{M}_n|$, expressed in terms of chained powers of~$2$. (As discussed in Section~\ref{section:dayn}, the calculation can feasibly be carried out only for $n \leq 7$, but the relations continue to hold for larger~$n$.)

For $n \geq 0$ and $G,K \in \mathcal{M}$, define:
\[
\begin{array}{c@{\quad=\quad}l}
\mathcal{R}_n^G & \left\{H \subset \mathcal{M}_{n-1} : H \text{ simplifies to } G\right\} \bigstrut \\
\mathcal{S}_n^K & \left\{H \in \mathcal{M}_n : K \in H\right\} \bigstrut \\
\mathcal{R}_n^{G,K} & \left\{H \subset \mathcal{M}_{n-1} : H \text{ simplifies to } G \text{ and } K \in H\right\} \bigstrut \\
\mathcal{N}_n & \left\{H \in \mathcal{M}_n : H \text{ is an $\mathscr{N}$-position}\right\} \bigstrut \\
\mathcal{N}_n^0 & \left\{H \in \mathcal{M}_n : H \text{ is an $\mathscr{N}$-position and } 0 \in H\right\} \bigstrut
\end{array}
\]
Here $\mathcal{R}_n^{G,K}$ is defined only when $G \in K$.

\begin{theorem}
For all $n \geq 2$, we have the following recurrences.
\[
\begin{array}{c@{\quad=\quad}>{\displaystyle}l}
|\mathcal{M}_n| & 2^{|\mathcal{M}_{n-1}|} - \sum_{G \in \mathcal{M}_{n-2}} |\mathcal{R}_n^G| \vspace{0.2in} \\
|\mathcal{R}_n^G| &
\begin{cases}
2^{|\mathcal{S}_{n-1}^G|} - 1 & \text {if } G \not\cong 0 \\
2^{|\mathcal{S}_{n-1}^G|} - 2^{|\mathcal{N}_{n-1}^0|} & \text {if } G \cong 0 \\
\end{cases} \vspace{0.2in} \\
|\mathcal{S}_n^K| &
2^{|\mathcal{M}_{n-1}| - 1} - \sum_{G \in K}|\mathcal{R}_n^{G,K}| - \sum_{G \in \mathcal{S}_{n-2}^K}|\mathcal{R}_n^G| \vspace{0.2in} \\
|\mathcal{R}_n^{G,K}| &
\begin{cases}
2^{|\mathcal{S}_{n-1}^G|-1} & \text{if } G\not\cong 0 \text{ or } K \text{ is a \P-position} \\
2^{|\mathcal{S}_{n-1}^G|-1} - 2^{|\mathcal{N}_{n-1}^0|-1} & \text{if } G \cong 0 \text{ and } K\text{ is an \N-position}
\end{cases} \vspace{0.1in} \\
\multicolumn{2}{l}{\qquad\qquad\qquad\textup{($|\mathcal{R}_n^{G,K}|$ is defined only when $G \in K$)}} \vspace{0.2in} \\
|\mathcal{N}_n| &
2^{|\mathcal{M}_{n-1}|} - 2^{|\mathcal{N}_{n-1}|} + 1 - \sum_{G \in \mathcal{N}_{n-2}}|\mathcal{R}_n^G| \vspace{0.2in} \\
|\mathcal{N}_n^0| &
2^{|\mathcal{M}_{n-1}|-1} - 2^{|\mathcal{N}_{n-1}|-1} - \sum_{G \in \mathcal{N}_{n-2}^0}|\mathcal{R}_n^G|
\end{array}
\]
\end{theorem}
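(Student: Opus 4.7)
The plan is to prove all six identities together by inclusion--exclusion keyed on the canonical form to which each non-canonical subset simplifies.

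First I would record the structural consequence of the Simplest Form Theorem that drives everything. A game born by day $n$ is determined by its set of options, i.e.\ a subset $H\subseteq\mathcal{M}_{n-1}$ (written in canonical form). Such an $H$ is itself in simplest form iff it has no reversible options; otherwise, by the Simplest Form Theorem, $H$ simplifies to a \emph{unique} canonical $G$, and Grundy--Smith simplification is the only route, so necessarily $H\cong G\cup\{H_1',\ldots,H_k'\}$ with $k\geq 1$ and each $H_j'\in\mathcal{S}_{n-1}^G$ (plus the proviso when $G\cong 0$). Since each $H_j'$ has $G$ as an option, $\tilde{\bd}(H)\geq\tilde{\bd}(G)+2$, so $G\in\mathcal{M}_{n-2}$. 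Summing over $G$ gives the $|\mathcal{M}_n|$ recurrence, and counting the valid $\{H_1',\ldots,H_k'\}$ for each $G$ yields the $|\mathcal{R}_n^G|$ formula: for $G\not\cong 0$ any non-empty subset of $\mathcal{S}_{n-1}^G$ works; for $G\cong 0$ the proviso excludes precisely the $2^{|\mathcal{N}_{n-1}^0|}$ subsets of $\mathcal{S}_{n-1}^0$ containing no $\P$-option.

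Next I would handle $|\mathcal{S}_n^K|$. Of the $2^{|\mathcal{M}_{n-1}|-1}$ subsets of $\mathcal{M}_{n-1}$ containing $K$, each non-canonical one simplifies to some $G\in\mathcal{M}_{n-2}$, and the occurrence of $K$ splits into two disjoint cases: either $K$ is preserved as an option of $G$ (so $K\in G$, i.e.\ $G\in\mathcal{S}_{n-2}^K$) or $K$ is itself one of the added reversible moves (so $G\in K$). Disjointness is automatic, because $K\in G$ together with $G\in K$ would make $K$ reversible in $G$, contradicting canonicity. The first case contributes all of $|\mathcal{R}_n^G|$ (every $H$ simplifying to such $G$ already has $K\in G\subseteq H$); the second contributes $|\mathcal{R}_n^{G,K}|$. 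For the $|\mathcal{R}_n^{G,K}|$ formula itself, I would restrict the added set $\{H_1',\ldots,H_k'\}$ to include $K$, giving $2^{|\mathcal{S}_{n-1}^G|-1}$ choices; the proviso correction of $2^{|\mathcal{N}_{n-1}^0|-1}$ applies only when $G\cong 0$ and $K$ is an $\N$-position, since a $\P$-position $K$ already fulfils the proviso on its own.

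For $|\mathcal{N}_n|$ and $|\mathcal{N}_n^0|$ I would use the mis\`ere outcome rule that a subset $H\subseteq\mathcal{M}_{n-1}$ is a $\P$-position iff $H\neq\emptyset$ and every element lies in $\mathcal{N}_{n-1}$. This counts $\P$-subsets as $2^{|\mathcal{N}_{n-1}|}-1$, hence $\N$-subsets as $2^{|\mathcal{M}_{n-1}|}-2^{|\mathcal{N}_{n-1}|}+1$. Since outcome is a function of value, every reducible $\N$-subset simplifies to some $G\in\mathcal{N}_{n-2}$, yielding the stated recurrence. The $|\mathcal{N}_n^0|$ version is parallel, restricted to subsets containing $0$; the extra ingredient is that $0$ is never itself a reversible option (having no options through which to revert), so $0\in H$ forces $0\in G$ and the reducible sum ranges over $\mathcal{N}_{n-2}^0$.

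The mathematics in each step is elementary; the real obstacle will be bookkeeping the six interlocking formulas so that every proviso adjustment is applied exactly once, and so that the $K\in G$ versus $G\in K$ split in the $\mathcal{S}_n^K$ analysis stays disjoint. Once the case tabulation is laid out, each identity is a direct finite sum.
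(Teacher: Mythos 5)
Your proposal is correct and follows essentially the same route as the paper's proof: each reducible subset of $\mathcal{M}_{n-1}$ is attributed to its unique canonical simplification target $G\in\mathcal{M}_{n-2}$, the added reversible options are counted as subsets of $\mathcal{S}_{n-1}^G$ with the proviso handled via $\mathcal{N}_{n-1}^0$, the $K\in G$ versus $G\in K$ dichotomy (disjoint by canonicity of $G$) gives $|\mathcal{S}_n^K|$, and the outcome counts use the rule that a position is $\mathscr{P}$ iff it is a nonempty set of $\mathscr{N}$-positions together with the fact that $0$ is never reversible. No substantive differences from the paper's argument.
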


\begin{proof}
We discuss each equation in turn.

\vspace{0.1in}\noindent
$|\mathcal{M}_n|$. There are $2^{|\mathcal{M}_{n-1}|}$ subsets of $\mathcal{M}_{n-1}$. For each subset $H \subset \mathcal{M}_{n-1}$, either $H$ is canonical, or else $H$ simplifies to $G$ for a unique $G \in \mathcal{M}_{n-2}$.

\vspace{0.1in}\noindent
$|\mathcal{R}_n^G|$. In order for $H \subset \mathcal{M}_{n-1}$ to simplify to~$G$, it must have the exact form
\[
H \cong \{G_1,\ldots,G_k,H_1,\ldots,H_l\},
\]
where $G \cong \{G_1,\ldots,G_k\}$ and $G \in$ each $H_j$. Now $G_1,\ldots,G_k$ are fixed, so games with this form correspond one-to-one with subsets $\{H_1,\ldots,H_l\}$ of $\mathcal{S}_{n-1}^G$. There are in total $2^{|\mathcal{S}_{n-1}^G|}$ such subsets, but we must subtract $1$ to exclude the empty set, which corresponds to $G$ itself.

If $G \not\cong 0$, then we are done. If $G \cong 0$, then we must apply the proviso as well: in order for $H$ to be reducible, it must also satisfy $o(H) = \N$, in addition to having the proscribed form. So we must subtract off a count of $H$ with $o(H) = \P$. Now $o(H) = \P$ if and only if every $H_j$ is an \N-position and there is at least one~$H_j$. There are $|\mathcal{N}_{n-1}^0|$ such \N-positions (since we also know that $0 \in$ each $H_j$), yielding a total of $2^{|\mathcal{N}_{n-1}^0|} - 1$ possibilities for~$H$. Subtracting this from the overall count of $2^{|\mathcal{S}_{n-1}^G|} - 1$ yields the total stated in the theorem.

\vspace{0.1in}\noindent
$|\mathcal{S}_n^K|$. There are $2^{|\mathcal{M}_{n-1}|-1}$ subsets $H \subset \mathcal{M}_{n-1}$ with $K \in H$. There are two ways that such an $H$ might be reducible: either $H$ simplifies to some $G \in K$ (so that $K$ reverses through~$G$), or else $H$ simplifies to some other $G \in \mathcal{M}_{n-2}$ with $K \in G$ (so that $K$ is already present in the simplified form~$G$). These are mutually exclusive, since in the first case $K \not\in G$, but in the second $K \in G$.

\vspace{0.1in}\noindent
$|\mathcal{R}_n^{G,K}|$. There are $2^{|\mathcal{M}_{n-1}|-1}$ subsets $H \subset \mathcal{M}_{n-1}$ with $K \in H$. To be reducible, $H$ must have the exact form
\[
H \cong \{G_1,\ldots,G_k,H_1,\ldots,H_j,K\}
\]
with $G \in$ each $H_j$. (The assumption $G \in K$ implies that $K$ is not among the options $G_1,\ldots,G_k$ of~$G$.) The rest of the argument proceeds just as for $|\mathcal{R}_n^G|$, with two modifications: (i)~$j = 0$ is no longer a special case; since $K \in H$, it will never be true that $H \cong G$. (ii)~If $G \cong 0$ but $o(K) = \P$, then the proviso does not apply, since the presence of $K$ ensures $o(H) = \N$.

\vspace{0.1in}\noindent
$|\mathcal{N}_n|$. There are $2^{|\mathcal{M}_{n-1}|}$ subsets $H \subset \mathcal{M}_{n-1}$. There are just two ways that $H$ might fail to be a canonical \N-position. Either $H$ is a \emph{nonempty} set of \N-positions, in which case it's a \P-position; or else $H$ simplifies to some other \N-position $G \in \mathcal{N}_{n-2}$. These are mutually exclusive, since no \P-position can simplify to an \N-position.

\vspace{0.1in}\noindent
$|\mathcal{N}_n^0|$. This one is similar to the preceding argument, with one small twist. There are $2^{|\mathcal{M}_{n-1}|-1}$ subsets $H \subset \mathcal{M}_{n-1}$ with $0 \in H$. There are two ways that $H$ might fail to be a canonical \N-position. Either $H$ is a set of \N-positions with $0 \in H$ (which must always be nonempty); or else $H$ simplifies to some other \N-position $G \in \mathcal{N}_{n-2}$. But if $H$ simplifies to some other~$G$, then it must be the case that $0 \in G$ as well (since $0$ can never be reversible). So these two conditions are both mutually exclusive and entirely contained within the $2^{|\mathcal{M}_{n-1}|-1}$ subsets originally counted.
\end{proof}

\section{An Algorithm for Computing Parts}

\label{appendix:algorithm}

Algorithm~\ref{algorithm:parts}, due jointly to Dan Hoey and the author, shows how to compute the parts of an arbitrary game $G$ efficiently. The algorithm is structured so that whenever a part $X$ is detected, then so is its counterpart $G - X$. A record of these part-counterpart relationships can be kept throughout the execution of the algorithm, so that differences of the form $G - X$ can be efficiently resolved.

\newcommand{\Parts}{\mathbf{Parts}}
\begin{algorithm}[tbp]
\hrule\vspace{0.15cm}
\begin{algorithmic}[1]
\State $\mathcal{X} \gets \emptyset$
\ForAll{$G' \in G$}
  \State Recursively compute $\Parts(G')$
\EndFor
\ForAll{$X$ such that $X \in \Parts(G')$ for some $G' \in G$} \label{step:mainloop}
  \If{$X$ is in every $\Parts(G')$ and every $X' \in \mathcal{X}$}
    \State $Y \gets \{G - X',G' - X\}$
    \State $\mathcal{X} \gets \mathcal{X} \cup \{X,Y\}$ \Comment{$X+Y$ is novel}
           \label{step:expandXnovel}
  \Else
    \If{$X \not\in \Parts(G')$ for some $G'$}
      \State Fix any such $G'$
      \State $\mathcal{Y} \gets \{G' - X' : X' \in \Parts(G')\}$
    \Else
      \State Fix any $X' \not\in \mathcal{X}$
      \State $\mathcal{Y} \gets \{G' - X' : X' \in \Parts(G')\}$
    \EndIf
    \ForAll{$Y \in \mathcal{Y}$}
      \vspace{0.1cm}
      \If{$\left\{\begin{tabular}{@{}l@{~}l@{}}
                       & $\forall G' \in G$, either $G' - Y = X'$ or $G' - X = Y'$; \\
          \textbf{and} & $\forall X' \in X$, either $G' - Y = X'$ or $(X',Y') \in \mathcal{X}$; \\
          \textbf{and} & $\forall Y' \in Y$, either $G' - X = Y'$ or $(X',Y') \in \mathcal{X}$
          \end{tabular}\right\}$}
          \label{step:bigif}
      \vspace{0.1cm}
        \State $\mathcal{X} \gets \mathcal{X} \cup \{X,Y\}$ \Comment{$X+Y$ is derived}
               \label{step:expandXderived}
      \EndIf
    \EndFor
  \EndIf
\EndFor
\If{$\mathcal{X}$ has changed}
  \State Return to Step~\ref{step:mainloop}
\EndIf
\State $\Parts(G) \gets \mathcal{X}$
\end{algorithmic}
\vspace{0.15cm}\hrule
\caption{Computing the parts of $G$.}
\label{algorithm:parts}
\end{algorithm}

\begin{theorem}
Algorithm~\ref{algorithm:parts} correctly computes $\Parts(G)$.
\end{theorem}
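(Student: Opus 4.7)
The plan is to prove correctness by induction on the formal birthday of $G$, separating soundness (every pair $(X,Y)$ inserted into $\mathcal{X}$ satisfies $G = X + Y$) from completeness (every part of $G$ is eventually enumerated). The outer induction gives me that recursive calls correctly return $\Parts(G')$ for each option $G' \in G$, together with a valid part-counterpart record among them.

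For soundness, I would dispatch the two insertion points. At line~\ref{step:expandXnovel}, the guard ensures $X \in \Parts(G')$ for every $G' \in G$ and that every $X' \in X$ already lies in $\mathcal{X}$; these are exactly the hypotheses that make every $G' - X$ and every $G - X'$ exist, so Lemma~\ref{lemma:difference}(b) forces the counterpart of $X$ in $G$ to be precisely the set $\{G - X',\, G' - X\}$ that is assigned to $Y$. At line~\ref{step:expandXderived}, the conjunction at line~\ref{step:bigif} is a direct transcription of Corollary~\ref{corollary:derivation} applied to $G$, to $X$, and to $Y$ under the alleged identity $G = X + Y$: the first conjunct matches each option $G'$ to an option of $X + Y$, and the second and third do the reverse, using either computed differences or previously verified pairs $(X', Y') \in \mathcal{X}$ as witnesses. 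Combined with the inductive soundness hypothesis on $\mathcal{X}$ and the four-condition equality test of Theorem~\ref{theorem:likeiffnotlinked}, this certifies $G = X + Y$; the proviso clauses are automatic since in this branch both $G$ and $X + Y$ have options.

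For completeness, fix any partition $G = X + Y$ with $X$ and $Y$ in simplest form. I must show $X$ is enumerated at line~\ref{step:mainloop} and eventually passes the appropriate guard. If $X$ is novel, Lemma~\ref{lemma:difference}(b) gives $X \in \Parts(G')$ for every $G'$ directly; otherwise, Lemma~\ref{lemma:derivedpartition} supplies $G^\dagger \in G$ and $Y^\dagger \in Y$ with $G^\dagger = X + Y^\dagger$, so $X \in \Parts(G^\dagger)$. Either way $X$ enters the main loop. A secondary induction on the formal birthday of $X$ then shows that by the time $X$ is processed in a sufficiently late outer pass, every option $X' \in X$ (which is itself a part of $G$, again by Lemma~\ref{lemma:difference}) already lies in $\mathcal{X}$; the novel guard then fires in the novel case. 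In the derived case, the same passes install the counterparts $(X', Y')$ that line~\ref{step:bigif} requires, and Lemma~\ref{lemma:derivedpartition} places the correct $Y$ inside the constructed candidate set $\mathcal{Y}$. The outer repeat loop is essential because line~\ref{step:bigif} depends on pairs $(X', Y')$ that may only be installed on earlier iterations, and termination follows from Lemma~\ref{lemma:cancellation}, which bounds $|\mathcal{X}|$.

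The hardest step will be the derived branch in both directions. For soundness I must show that the three clauses at line~\ref{step:bigif}, together with the accumulated knowledge in $\mathcal{X}$, suffice to rule out every way $G$ could be linked to $X + Y$, not merely the ways considered in Corollary~\ref{corollary:derivation}; this requires a careful case-by-case matching against Theorem~\ref{theorem:likeiffnotlinked}. For completeness the delicate point is that the candidate set $\mathcal{Y}$ built in the derived branch must always contain the genuine counterpart $Y$. Here Lemma~\ref{lemma:derivedpartition} guarantees the existence of a witnessing $G^\dagger$ such that $Y = G^\dagger - X^\dagger$ for some option $X^\dagger \in X$, and I would verify that the case split inside the outer \emph{else} is engineered so that the fixed $G'$ (or, in the symmetric sub-case, the fixed $X'$) can always be taken to match $G^\dagger$. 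Once these two points are pinned down, the induction closes cleanly.
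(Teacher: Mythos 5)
Your skeleton (outer induction on $G$, soundness versus completeness, Corollary~\ref{corollary:derivation} driving the derived branch) matches the paper's, but your completeness argument for the novel case has a genuine gap, and it is exactly at the point where the paper's proof does its real work. You claim that a secondary induction on $\tilde{\bd}(X)$ guarantees that every option $X' \in X$ eventually lies in $\mathcal{X}$, so that the guard at Step~\ref{step:expandXnovel} fires. But $X' \in \mathcal{X}$ requires the algorithm to detect the \emph{partition} $X' + (G - X')$, and your induction controls only $\birthday(X')$, not $\birthday(G - X')$. The counterpart $G - X'$ can have arbitrarily large birthday relative to $X$ and $Y$, and detecting \emph{its} partition may in turn require options of $G-X'$ to be in $\mathcal{X}$, whose birthdays are again uncontrolled; the descent is not well-founded on $\tilde{\bd}(X)$ alone. (A symptom of the same problem: if $X$ is derived and no $G' - X$ exists, then $X$ never even appears in the loop at Step~\ref{step:mainloop}, and one checks that $X \notin \mathcal{Y}$ when its novel counterpart $Y$ is processed in the \emph{else} branch; so the partition can only be caught by the novel guard for $Y$, whose prerequisites your induction does not reach.)

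The paper closes this gap differently, and the step is not optional. It takes a counterexample partition $X + Y$ minimal with respect to $\birthday(X) + \birthday(Y)$, and in the novel case it does \emph{not} conclude that the guard at Step~\ref{step:expandXnovel} must fire. Instead: if some $X' \notin \mathcal{X}$, minimality forces $\birthday(X') + \birthday(G-X') \geq \birthday(X) + \birthday(Y)$, hence $\birthday(G - X') > \birthday(Y)$; since $G-X'$ is an option of the form $Y = \{G - X', G' - X\}$, it must be reversible in $Y$, which (ruling out $G - X'' = Y$ by Cancellation) yields $Y = G' - X'$, so the partition is caught by the \emph{derived} branch at Step~\ref{step:expandXderived} after all. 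Your proposal needs this re-routing argument (or an equivalent substitute) to be a proof. A smaller issue: in your soundness paragraph you invoke Lemma~\ref{lemma:difference}(b) to conclude that $X$ \emph{is} a part of $G$ with counterpart $\{G - X', G' - X\}$, but that lemma's hypothesis is that $G - X$ already exists; the converse implication (existence of all the constituent differences implies $G = X + \{G-X', G'-X\}$) is true but requires a separate check against Theorem~\ref{theorem:likeiffnotlinked}, which you should not elide.
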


\begin{proof}
We can assume that $\Parts(G')$ is correctly computed for all $G' \in G$.  Now it is easy to see that every game that is put into $\mathcal{X}$ is indeed a part of~$G$: if a partition $X+Y$ is added in Step~\ref{step:expandXnovel}, then $X+Y$ is novel; if it's added in Step~\ref{step:expandXderived}, then the condition of Step~\ref{step:bigif} directly witnesses the identity $G = X + Y$.

To complete the proof, we must show that every part of $G$ is eventually placed in $\mathcal{X}$. Suppose not, and let $X+Y$ be a partition of $G$ that the algorithm fails to find. Assume that $X+Y$ is minimal in the sense of $\birthday(X)+\birthday(Y)$. In particular, at some stage of the algorithm we have $X',Y' \in \mathcal{X}$ for every partition of the form $G = X' + Y'$.

Suppose $X+Y$ is derived.  By Lemma~\ref{lemma:derivedpartition}, there is some $G^\dag$ such that $G^\dag = X + Y'$.  Therefore $X \in \Parts(G^\dag)$, so $X$ will be encountered in the main loop of Algorithm~\ref{algorithm:parts}.  Since $X$ is derived, either some $G - X'$ or some $G' - X$ must fail to exist.  If $G - X'$ does not exist, then since $G = X + Y$, we must have $G' = X' + Y$ for some $G'$.  Therefore $Y = G' - X'$.  If $G' - X$ does not exist, then $G' = X' + Y$ for some $X'$, so again $Y = G' - X'$.  In either case $Y \in \mathcal{Y}$ (as defined in Algorithm~\ref{algorithm:parts}).  Thus it suffices to verify that $G$, $X$, and $Y$ jointly satisfy the condition of Step~\ref{step:bigif}.  But by the inductive hypothesis, we have $X',Y' \in \mathcal{X}$ whenever $G = X' + Y'$, so the condition asserts precisely that $G = X + Y$, which is true.  Therefore $X$ and $Y$ are put into $\mathcal{X}$ in Step~\ref{step:expandXderived}, a contradiction.

Finally, suppose $X+Y$ is novel, and assume without loss of generality that~$X$ is novel.  Then every $G - X'$ and $G' - X$ exists, and it is easily checked that $Y = \{G-X',G'-X\}$.  Since the algorithm fails to detect $X+Y$ at Step~\ref{step:expandXnovel}, it must be the case that $X' \not\in \mathcal{X}$ for some $X'$.  Now $\birthday(X') < \birthday(X)$, so by the inductive hypothesis, this implies $\birthday(G-X') > \birthday(Y)$.  Therefore $G-X'$ must be a reversible option of $Y = \{G-X',G'-X\}$, so either $G-X'' = Y$ or $G'-X' = Y$.  The former is obviously false (by Cancellation), so we must have $Y = G'-X'$.  But then the partition $X+Y$ will be detected in Step~\ref{step:expandXderived}, by the same argument used in the previous paragraph.
\end{proof}

\bibliography{games}

\end{document}